\newcommand{\Zz}{\mathbb{Z}}
\newcommand{\Qq}{\mathbb{Q}}
\renewcommand {\leq}{\leqslant}
\renewcommand {\geq}{\geqslant}
\renewcommand {\le}{\leqslant}
\renewcommand {\ge}{\geqslant}
\renewcommand {\epsilon}{\varepsilon}
\newcommand{\divides}{\mathrel{\mid}}
\newcommand{\notdivides}{\mathrel{\not|}}
\newcommand{\defi}[1]{\emph{\textbf{#1}}}
\newcommand{\Tree}{\mathop{\mathrm{Tree}}\nolimits}
\newcommand{\Trunk}{\mathop{\mathrm{Trunk}}\nolimits}
\newcommand{\Fan}{\mathop{\mathrm{Fan}}\nolimits}
\newcommand{\val}{\mathop{\mathrm{val}}\nolimits}
\newcommand{\mult}{\mathop{\mathrm{mult}}\nolimits}
\renewcommand{\pod}[1]{\allowbreak\mathchoice
	{\if@display \mkern 8mu\else \mkern 8mu\fi (#1)}
	{\if@display \mkern 8mu\else \mkern 8mu\fi (#1)}
	{\mkern4mu(#1)}
	{\mkern4mu(#1)}
}
\newsavebox\myboxA
\newsavebox\myboxB
\newlength\mylenA
\newcommand*\xoverline[2][0.75]{%
	\sbox{\myboxA}{$\m@th#2$}%
	\setbox\myboxB\null
	\ht\myboxB=\ht\myboxA%
	\dp\myboxB=\dp\myboxA%
	\wd\myboxB=#1\wd\myboxA
	\sbox\myboxB{$\m@th\overline{\copy\myboxB}$}
	\setlength\mylenA{\the\wd\myboxA}
	\addtolength\mylenA{-\the\wd\myboxB}%
	\ifdim\wd\myboxB<\wd\myboxA%
	\rlap{\hskip 0.5\mylenA\usebox\myboxB}{\usebox\myboxA}%
	\else
	\hskip -0.5\mylenA\rlap{\usebox\myboxA}{\hskip 0.5\mylenA\usebox\myboxB}%
	\fi}
\newcommand{\reduc}[1]{\xoverline{#1}}
\def\subsection{\@startsection{subsection}{2}%
	\z@{.5\linespacing\@plus.7\linespacing}{.3\linespacing}%
	{\normalfont\bfseries}}
\theoremstyle{plain}
	\newtheorem{theorem}{Theorem}[section]    
	\newtheorem{lemma}[theorem]{Lemma}       
	\newtheorem{proposition}[theorem]{Proposition}      
	\newtheorem{corollary}[theorem]{Corollary}      
	\newtheorem*{theorem*}{Theorem}
\theoremstyle{remark}
	\newtheorem{definition}[theorem]{Definition}      
	\newtheorem*{remark*}{Remark}  
	\newtheorem{example}[theorem]{Example}
\newcommand{%
	\input{figures/.tikz}%
}[1]{%
	\input{figures/#1.tikz}%
}
\newcommand{\myfigure}[2]{
	\begin{center}\small
		\tikzstyle{every picture}=[scale=1.0*#1]
		#2
\end{center}}
\title{Solutions of a polynomial equation modulo a prime power}
\author{Arnaud Bodin}
\author{Christian Drouin}
\email{arnaud.bodin@univ-lille.fr}
\email{christian.drouin@wanadoo.fr}
\address{Université de Lille, CNRS, Laboratoire Paul Painlevé, 59000 Lille, France}
\address{Seignosse, France}
\subjclass[2020] {Primary 11A07; Sec. 11D45, 11S05}
\keywords{polynomial, congruence, tree}
\date{\today}
\begin{document}

\begin{abstract}
How do you find the integer solutions of a polynomial equation modulo an integer?
\end{abstract}

\maketitle


\section{Introduction}

\subsection{Roots of polynomials over $\Zz/n\Zz$}

If $p$ is a prime number, the ring $\Zz/p\Zz$ is actually a field.
Thus, a polynomial $P(X) \in \Zz[X]$ of degree $d$ has at most $d$ roots in $\Zz/p\Zz$.
Problems arise when calculations are done modulo an arbitrary integer $n$.
For example, what are the solutions to the equation
\[
x^2+11 \equiv 0 \pmod{15}?
\]
There are $4$ solutions $\{2, 7, 8, 13\}$ even though the equation is indeed a polynomial equation of degree $2$.

\medskip

Even very simple equations can have surprisingly many solutions. 
For instance, take $P(X) = X^2$.  
When working modulo $p^{2e}$ with $p>2$, the equation
\[
x^2 \equiv 0 \pmod{p^{2e}}
\]
has not two but $p^e$ distinct solutions:
\[
x_i = i p^e \quad \text{for } i=0,1,\dots,p^e-1.
\]
Thus, even a degree $2$ polynomial can have exponentially many solutions as the modulus grows. 

\medskip

Finally, Shamir \cite{Shamir1993} gave the remarkable example of the polynomial $P(X) = X$, 
which factors in a surprising way modulo a composite number $n=pq$ with two distinct primes:
\[
X \equiv (p^2+q^2)^{-1}(pX+q)(qX+p) \pmod{pq},
\]
where $p^2+q^2$ is invertible modulo $n$, and $pX+q$ and $qX+p$ are irreducible over $\Zz/n\Zz$.  
Even such a simple polynomial can behave in subtle ways when the modulus is not prime.

\subsection{Reduction to a prime power modulus}

How should one understand these phenomena? 
If $p$ is prime, the ring $\Zz/p\Zz$ is a field, so a polynomial of degree $d$ has at most $d$ roots modulo $p$.  

If $n = \prod_{i=1}^{l} p_i^{e_i}$ is the prime factorization, then solving $P(x) \equiv 0 \pmod{n}$ is equivalent to
solving $P(x) \equiv 0 \pmod{p_i^{e_i}}$ for each $i=1,\ldots,l$.
This reduction follows from the Chinese Remainder Theorem, which also provides an efficient way to recombine the solutions modulo each prime power into solutions modulo $n$, using only modular inverses.

But is the problem of determining the roots of a polynomial simpler if the modulus is just a power of a prime number? In fact, no! For example, the polynomial $X^2$ of degree $2$ already has $3$ roots $\{0,3,6\}$ modulo $3^2$. Thus, the real source of complications is already present when $n = p^e$ is a prime power.

\begin{quote}
	\emph{	
		Let $p$ be a prime number, $e\ge0$ an integer, and $P(X) \in \Zz[X]$.
		The purpose of this article is to calculate the integer solutions $x$ of the equation $P(x) \equiv 0 \pmod{p^e}$, and to understand how these solutions evolve as $e$ grows.
	}
\end{quote}

\subsection{Outline}

In this note, we explain how the solutions of the equations $P(x) \equiv 0 \pmod{p^e}$ evolve as $e$ grows, and how they can all be represented in the form of a \emph{tree}. 
Each vertex corresponds to a solution modulo $p^e$, and its children are the solutions of the same equation modulo $p^{e+1}$ that reduce to it modulo $p^e$.
The figure below represents the set of solutions to the equations $P(x) \equiv 0 \pmod{p^e}$ for  
$P(X) = (X^2+3)(X^2+3X+9)$, with $p=3$, for different values of $e$.  
(This example will be revisited later, see Examples \ref{ex:ex1bis} and \ref{ex:ex1}.)

Since this tree can have many vertices, our goal is to concentrate all this information into a much smaller subtree, the \emph{trunk}.
(In our example, this corresponds to the subtree with two edges in red, drawn with thick lines in the figure.)
To each vertex of the trunk, we attach an integer called the \emph{thickness}.


The trunk allows the complete reconstruction of the solution tree (Theorem \ref{th:trunktotree}):  
starting from each vertex of the trunk, we build a \emph{fan} of solutions emerging from this vertex.
In the figure below there are two fans:
the first one consists of all possible children of the vertex $0$ at level $1$, up to level $t_1 = 3$ ($t_1$ being the thickness at this vertex of level $1$). 
The second fan starts at the vertex $3$ at level $2$, up to level $t_1 + t_2 = 4$ ($t_2$ being the thickness at this vertex of level $2$).  
Alternatively, one could simply count the number of solutions at each level $p^e$ without enumerating them (Corollary \ref{cor:count}).

\begin{figure}[H]
	\myfigure{1}{
\begin{tikzpicture}[scale=1.7]
\usetikzlibrary{decorations.pathreplacing}
\tikzset{
  line/.style = {
  },
  vector/.style = {
    thick,-latex
  },
  dot/.style = {
    insert path={
      node[scale=3]{.}
    }
  },
  smalldot/.style = {
    insert path={
      node[scale=1.5]{.}
    }
  }
}

\begin{scope}[xshift=6cm]
 \path
   (0,0) coordinate (O)
 ;

\foreach \i in {0,...,2} {
     \path (1*\i-1,1) coordinate (P\i1);
  }

\foreach \i in {0,...,8} {
     \path ({(2.5*\i-5.5)/3},2) coordinate (P\i2);
  }
\foreach \i in {0,...,26} {
     \path ({(2.5*\i-19)/9},3) coordinate (P\i3);
  }
\foreach \i in {0,...,80} {
     \path ({(3*\i-66)/27},4) coordinate (P\i4);
  }

\path (O) edge[line, very thick, red] (P01);
 
\path (P01) edge[line, blue] (P02);
\path (P01) edge[line, blue] (P22);

\path (P01) edge[line, very thick, red] (P12);

\path (P02) edge[line, blue] (P03);
\path (P02) edge[line, blue] (P13);
\path (P02) edge[line, blue] (P23);

\path (P12) edge[line, blue] (P33);
\path (P12) edge[line, blue] (P43);
\path (P12) edge[line, blue] (P53);

\path (P22) edge[line, blue] (P63);
\path (P22) edge[line, blue] (P73);
\path (P22) edge[line, blue] (P83);

\path (P33) edge[line, thin, blue] (P94);
\path (P33) edge[line, thin, blue] (P104);
\path (P33) edge[line, thin, blue] (P114);

\path (P43) edge[line, thin, blue] (P124);
\path (P43) edge[line, thin, blue] (P134);
\path (P43) edge[line, thin, blue] (P144);

\path (P53) edge[line, thin, blue] (P154);
\path (P53) edge[line, thin, blue] (P164);
\path (P53) edge[line, thin, blue] (P174);


  \foreach \i in {0,...,0} {
     \path  (P\i1) [dot] {};
  }
  \foreach \i in {0,...,2} {
     \path  (P\i2) [dot] {};
  }
  \foreach \i in {0,...,8} {
     \path  (P\i3) [dot] {};
  }
  \foreach \i in {9,...,17} {
     \path  (P\i4) [smalldot] {};
  }
 \path
   (O) [red, dot, red] {}
   (P01) [dot, red] node[right=0.2em,scale=0.7]{$t_1 = 3$}
   (P12) [dot, red] node[right=0.1em,scale=0.7]{$t_2 = 1$}
 ;

 \path
   (P01)  node[left,scale=0.7]{$0$}

   (P02)  node[left,scale=0.7]{$0$}
   (P12) node[left,scale=0.7]{$3$}
   (P22) node[left,scale=0.7]{$6$}

   (P03) node[left,scale=0.6]{$0$}
   (P13) node[left,scale=0.6]{$9$}
   (P23) node[left,scale=0.6]{$18$}
   (P33) node[left,scale=0.6]{$3$}
   (P43) node[left,scale=0.6]{$12$}
   (P53) node[left,scale=0.6]{$21$}
   (P63) node[left,scale=0.6]{$6$}
   (P73) node[left,scale=0.6]{$15$}
   (P83) node[left,scale=0.6]{$24$}

   (P94) node[above,scale=0.5]{$3$}
   (P104) node[above=5pt,scale=0.5]{$30$}
   (P114) node[above,scale=0.5]{$57$}
   (P124) node[above=5pt,scale=0.5]{$12$}
   (P134) node[above,scale=0.5]{$39$}
   (P144) node[above=5pt,scale=0.5]{$66$}
   (P154) node[above,scale=0.5]{$21$}
   (P164) node[above=5pt,scale=0.5]{$48$}
   (P174) node[above,scale=0.5]{$75$}

 ;

  \draw[line,thin,gray] (1,1) -- ++(1,0) node[right, black, fill=white,]{solutions mod $p$};
  \draw[line,thin,gray] (1,2) -- ++(1,0) node[right, black, fill=white, ]{solutions mod $p^2$};
  \draw[line,thin,gray] (1,3) -- ++(1,0) node[right, black, fill=white, ]{solutions mod $p^3$};
  \draw[line,thin,gray] (1,4) -- ++(1,0) node[right, black, fill=white, ]{solutions mod $p^4$};


\end{scope}

\end{tikzpicture}%

	}
	\caption{The trunk and the tree of solutions of $P(X) = (X^2+3)(X^2+3X+9)$, $p=3$.}
	\label{fig:ex0}
\end{figure}
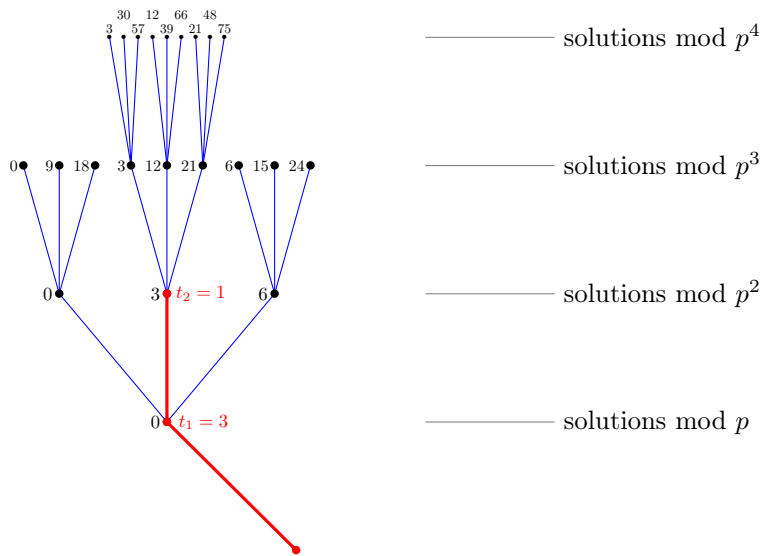

\section{Trees and trunks}
\label{sec:definitions}

\subsection{$p$-adic congruence tree}

We fix $p \ge 2$ a prime number.
The \defi{$p$-adic congruence tree}, denoted $\Omega_p$, is an infinite tree whose root is the pair
$(0, 0)$ of level $e = 0$ and whose vertices of level $e$ are the pairs $(x, e)$, where the integer $x$ is between $0$ and $p^e-1$. The edges of this tree
are those connecting two vertices $(x,e)$ and $(x',e+1)$ such that $x' \equiv x \pmod{p^e}$.
Thus, we can define a partial order relation on the vertices of this graph by: $(x,e) \vartriangleleft (x',e')$
if and only if $e \le e'$ and $x' \equiv x \pmod{p^e}$. In other words, $(x,e)$ is located on the path connecting the root to $(x',e')$.

\begin{figure}[H]
\myfigure{0.9}{
\begin{tikzpicture}[scale=2]

\tikzset{
  line/.style = {
  },
  vector/.style = {
    thick,-latex
  },
  dot/.style = {
    insert path={
      node[scale=3]{.}
    }
  }
}

\def\p{3}
 \path
   (0,0) coordinate (O)
 ;

\foreach \i in {0,...,2} {
     \path (\i-1,1) coordinate (P\i1);
  }

\foreach \i in {0,...,8} {
     \path ({(\i-4)/3},2) coordinate (P\i2);
  }
\foreach \i in {0,...,26} {
     \path ({(\i-13)/9},3) coordinate (P\i3);
  }

  \foreach \i in {0,1,2} {
     \path (O) edge[line] (P\i1);
  }
  \foreach \i in {0,1,2} {
     \foreach \k in {0,1,2} {
         \pgfmathtruncatemacro\ii{\i+3*\k}
          \path (P\k1) edge[line] (P\ii2);
     }
  }
  \foreach \i in {0,1,2} {
     \foreach \k in {0,1,2} {
         \pgfmathtruncatemacro\ii{\i+3*\k}
          \path (P\k1) edge[line] (P\ii2);
     }
  }

  \foreach \i in {0,1,2} {
     \foreach \k in {0,...,8} {
         \pgfmathtruncatemacro\ii{\i+3*\k}
          \path (P\k2) edge[line,thin,gray] (P\ii3);
     }
  }

 \path
   (O) [dot] node[below left]{root}
 ;
  \foreach \i in {0,...,2} {
     \path  (P\i1) [dot] node[right, scale=0.8]{$\i$};
  }
  \foreach \i in {0,...,8} {
     \path  (P\i2) [dot] {};
  }
  \foreach \k in {0,...,2} {
     \pgfmathtruncatemacro\i{0+\k}
     \pgfmathtruncatemacro\ii{0+3*\k}
     \path (P\i2) node[right, scale=0.8]{$\ii$};

     \pgfmathtruncatemacro\i{3+\k}
     \pgfmathtruncatemacro\ii{1+3*\k}
     \path (P\i2) node[right, scale=0.8]{$\ii$};

     \pgfmathtruncatemacro\i{6+\k}
     \pgfmathtruncatemacro\ii{2+3*\k}
     \path (P\i2) node[right, scale=0.8]{$\ii$};
  }
 \path
   (P03) [dot] node[scale=0.5,above]{$0$}
   (P13) [dot] node[scale=0.5,above]{$9$}
   (P23) [dot] node[scale=0.5,above]{$18$}
   (P33) [dot] node[scale=0.5,above]{$3$}
   (P43) [dot] node[scale=0.5,above]{$12$}
   (P53) [dot] node[scale=0.5,above]{$21$}
 ;
  \draw[line,thin,gray] (2,1) -- ++(1,0) node[midway, black, fill=white]{$p$};
  \draw[line,thin,gray] (2,2) -- ++(1,0) node[midway, black, fill=white]{$p^2$};
  \draw[line,thin,gray] (2,3) -- ++(1,0) node[midway, black, fill=white]{$p^3$};

\node at (0,-0.5) {};

\end{tikzpicture}%
\qquad
\begin{tikzpicture}[scale=2]

\tikzset{
  line/.style = {
  },
  vector/.style = {
    thick,-latex
  },
  dot/.style = {
    insert path={
      node[scale=3]{.}
    }
  }
}

\def\p{3}
 \path
   (0,0) coordinate (O)
 ;

\foreach \i in {0,...,2} {
     \path (\i-1,1) coordinate (P\i1);
  }

\foreach \i in {0,...,8} {
     \path ({(\i-4)/3},2) coordinate (P\i2);
  }
\foreach \i in {0,...,26} {
     \path ({(\i-13)/9},3) coordinate (P\i3);
  }

  \foreach \i in {0,1,2} {
     \path (O) edge[line] (P\i1);
     \node[left,blue,scale=0.8] at ($(O)!0.7!(P\i1)$) {$\i$};
  }
  \foreach \i in {0,1,2} {
     \foreach \k in {0,1,2} {
         \pgfmathtruncatemacro\ii{\i+3*\k}
          \path (P\k1) edge[line] (P\ii2);
          \node[left=-0.1em,blue,scale=0.8] at ($(P\k1)!0.7!(P\ii2)$) {$\i$};
     }
  }

  \foreach \i in {0,1,2} {
     \foreach \k in {6,...,8} {
         \pgfmathtruncatemacro\ii{\i+3*\k}
          \path (P\k2) edge[line] (P\ii3);
          \node[blue,left=-0.2em,scale=0.6] at ($(P\k2)!0.7!(P\ii3)$) {$\i$};
     }
  }

\path (O) edge[line,blue,thick] (P21);
\path (P21) edge[line,blue,thick] (P62);
\path (P62) edge[line,blue,thick] (P193);


 \path
   (O) [dot] node[below]{}
 ;
  \foreach \i in {0,...,2} {
     \path  (P\i1) [dot] {};
  }
  \foreach \i in {0,...,8} {
     \path  (P\i2) [dot] {};
  }

 \path(P193) [dot] node[above,scale=0.9]{$\color{red}11$};
\path (0,-0.5) node[above,scale=1]{${\color{red}11} = {\color{blue}2} + {\color{blue}0}p+ {\color{blue}1}p^2$}
;
 ;

\end{tikzpicture}%

}
\caption{Here $p=3$. Left: the $p$-adic congruence tree $\Omega_p$, each vertex is labeled by in integer $x \in[0,p^e-1]$. Right: the decomposition of $x=11$ in base $p$, each edge is labeled by a integer $a_i \in[0,p-1]$.}
\label{fig:padictree}
\end{figure}
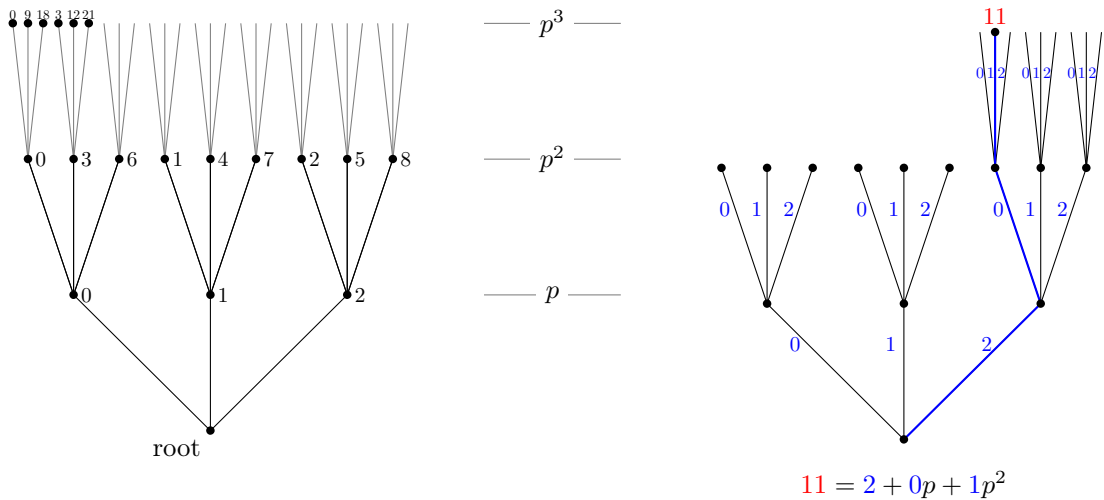

For a given vertex, we can consider that each outgoing edge is indexed by an integer between 0 and $p-1$.
A vertex of the tree corresponds to an integer $x\in\Zz$, the path from the root of the tree to this vertex corresponds to its $p$-adic decomposition, i.e.{} a finite sum $x = \sum_{i\ge0} a_i p^i$ with $0 \le a_i \le p-1$ (for each $i \ge0$).
The infinite paths in the tree correspond to coherent sequences of residues and naturally form a ring, which is exactly the ring of $p$-adic integers $\Zz_p$.

\subsection{Solution tree}

Let $P(X) \in \Zz[X]$ be a polynomial with integer coefficients.
We denote:
\[ 
\Tree(P) = \big\{ (x,e) \in \Omega_p \mid P(x) \equiv 0 \pmod{p^e} \big\} 
\]

This \defi{solution tree} may be finite or infinite.
Let us verify that $\Tree(P)$ is indeed a tree: 
let $(x,e) \vartriangleleft (x',e')$ with $(x',e') \in \Tree(P)$,
then $x \equiv x' \pmod{p^e}$ hence $P(x) \equiv P(x') \equiv 0 \pmod{p^e}$ and thus we also have $(x,e) \in \Tree(P)$.

In particular, infinite paths in $\Tree(P)$ (that is, sequences $(x_e, e)$ with $P(x_e) \equiv 0 \mod p^e$ and $x_{e+1} \equiv x_e \mod p^e$) correspond to roots in $\Zz_p$ of the polynomial $P$, meaning $p$-adic integers $\alpha$ such that $P(\alpha) = 0$ in $\Zz_p$.

\subsection{Thickness}

Let $P \in \Zz[X]$ be a polynomial of degree $d$.
To simplify the presentation throughout this article, we assume that $p$ does not divide $P(X)$ in $\Zz[X]$, in other words, the coefficients of $P$ are not simultaneously all divisible by $p$.
This is not a significant loss of generality; if this assumption were not verified, we would start by writing $P(X) = p^{t_0} Q(X)$ where $p$ does not divide $Q(X)$ and then all the results would apply to $Q(X)$.

\begin{definition}
	\label{def:thickness}
	Let $r \in \Zz$.
	The \defi{thickness} $t$ of $P$ at $r$ is the largest integer such that there exists $Q(X) \in \Zz[X]$ such that:
	\[ 
	P(r+pX) = p^t Q(X)
	\]
	The polynomial $Q$ is the \defi{successor} of $P$ for the root $r$.
\end{definition}
Note that the definition of thickness is only meaningful for the roots of $P$ modulo $p$:
\[ 
P(r) \equiv 0 \pmod{p} \iff t \ge 1 
\]
Also note that by the maximality of $t$, $p$ does not divide $Q(X)$ in $\Zz[X]$.

\subsection{Trunk of a polynomial}

For a polynomial $P \in \Zz[X]$, we now define its \defi{trunk} and the \defi{thicknesses} associated with its vertices.
We will denote the trunk by $\Trunk(P)$. It is a subtree, maybe infinite, of the solution tree $\Tree(P)$. 

We set $P_0 = P$. We define the structure of the trunk inductively, with each level built from the previous one.
At each step, we look for roots modulo $p$ of the current polynomial $P_k$. For each such root $r$, we consider the polynomial $P_k(r + pX)$ and factor out the highest power $p^t$ of $p$ ($t$ is the thickness).
We then define the successor polynomial associated with $r$ as
$P_{k+1}(X) = \frac{1}{p^t} P_k(r + pX)$.

More precisely:
\begin{itemize}
	\item \emph{Level $0$.} 
	We set $(r,k)=(0,0) \in \Trunk(P)$. This is the only vertex of the trunk to which no thickness is associated.
	
	\item \emph{Level $1$.} 
	For each $r_0 \in \llbracket 0, p-1 \rrbracket$ such that $P(r_0) \equiv 0 \pmod{p}$, we compute the decomposition $P(r_0+pX) = p^{t_1} Q(X)$ and include in the trunk the vertex $(r_0,1)$ associated with the thickness $t_1$.
	
	\item \emph{Level $2$.} 
	For the successor $Q$ of $P$ at each $r_0$ from the previous step, we look for solutions $r_1 \in \llbracket 0, p-1 \rrbracket$ such that $Q(r_1) \equiv 0 \pmod{p}$; we compute the decomposition $Q(r_1+pX)=p^{t_2} R(X)$; the pair $(r_0+p r_1, 2)$ is a new element of $\Trunk(P)$ associated with the thickness $t_2$.
	
	\item \emph{From level $k$ to level $k+1$.} 
	By induction, suppose that $(r_0+pr_1+p^2r_2+\cdots+p^{k-1}r_{k-1},k) \in \Trunk(P)$ with the polynomial $P_k$ obtained as a successor of $r_{k-1}$. We look for solutions $r_k \in \llbracket 0, p-1 \rrbracket$ such that $P_k(r_k) \equiv 0 \pmod{p}$; we compute the decomposition $P_k (r_k+pX) = p^{t_{k+1}} P_{k+1}(X)$; the pair $(r_0+pr_1+p^2r_2+\cdots+p^{k}r_{k}, k+1)$ is an element of $\Trunk(P)$ associated with the thickness $t_{k+1}$.
\end{itemize}

The \defi{tree-top function} associates to each vertex $(r,k)$ of $\Trunk(P)$ is the sum of the thicknesses encountered on the path to the root. In other words,
\[
\varphi(r,k) = t_1 + t_2 + \cdots + t_k.
\]
where each $t_i$ is the thickness at level $i$ of the vertex on the path between the root and the vertex $(r,k)$.

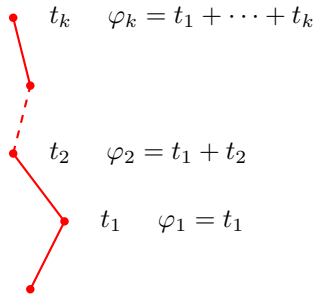
\begin{figure}[H]
	\myfigure{0.6}{
\begin{tikzpicture}[scale=1.5]

\tikzset{
  line/.style = {
  },
  vector/.style = {
    thick,-latex
  },
  dot/.style = {
    insert path={
      node[scale=3]{.}
    }
  }
}

\begin{scope}

 \path
   (0,0) coordinate (P0)
   (0.5,1) coordinate (P1)
   (-0.25,2) coordinate (P2)
   (0,3) coordinate (P3)
   (-0.25,4) coordinate (P4)
 ;

\path 
  (P0) edge[line, thick, red] (P1)
  (P1) edge[line, thick, red] (P2)
  (P2) edge[line, thick, dashed, red] (P3)
  (P3) edge[line, thick, red] (P4)
;


 \path
   (P0) [red, dot] node[below]{}
   (P1) [dot, red] node[black,right=1em]{$t_1$ \quad $\varphi_1 = t_1$}
   (P2) [dot, red] node[black,right=1em]{$t_2$ \quad $\varphi_2 = t_1+t_2$}
   (P3) [dot, red] node[below]{}
   (P4) [dot, red] node[black,right=1em]{$t_k$ \quad $\varphi_k = t_1+\cdots+t_k$}
 ;



\end{scope}

\end{tikzpicture}%

	}
	\caption{Thickness and the tree-top function $\varphi$.}
	\label{fig:toptree}
\end{figure}

\subsection{An example}

Before stating the theorem, let’s go through an example to better understand these concepts. We explain the computation of the trunk illustrated in Figure \ref{fig:ex1}.

\begin{example}
	\label{ex:ex1bis}
	Let $P(X) = (X^2+3)(X^2+3X+9)$ and $p=3$.
	The reduction modulo $p$ of $P$ is $\reduc{P}(X)= X^4$.
	Thus for $r_0 = 0$, we have $P(r_0) \equiv 0 \pmod{3}$.
	The decomposition of $P(r_0+pX)$ is $P(3X) = 3^3 (3X^2+1)(X^2+X+1)$.
	Thus the thickness associated with $r_0$ is $t_1=3$ and the successor of $P$ at $r_0$ is $P_1(X) = (3X^2+1)(X^2+X+1)$.
	The first vertex of the trunk is thus $(r_0,1)=(0,1)$ associated with a thickness $t_1=3$.
	
	We start again, from $P_1$: $\reduc{P_1}(X) = X^2+X+1$ vanishes modulo $3$ at $r_1=1$,
	and the decomposition of $P_1(r_1+pX)$ is $P_1(1+3X) = 3^1 (27X^2+18X+4)(3X^2+3X+1)$.
	Thus the second vertex of the trunk is $(r_0+pr_1,2)=(3,2)$ associated with a thickness $t_2=1$.
	
	The successor of $P_1$ at $r_1$ is $P_2(X) = (27X^2+18X+4)(3X^2+3X+1)$, which does not vanish modulo $p=3$. Thus, the calculations stop here.
	
	In summary, besides the root $(0,0)$, the trunk is composed of vertices $(0,1)$ (with $t_1=3$) and $(3,2)$ (with $t_2=1$). 
\end{example}

\section{Main theorem}
\label{sec:main}

\subsection{From the trunk to the tree}

We recall that:
\[
P(x) \equiv 0 \pmod{p^e} \iff (x,e) \in \Tree(P)
\]
The following theorem indicates how $\Trunk(P)$ determines the roots $\Tree(P)$ of a polynomial $P$, via the tree-top function $\varphi$ associated with the trunk.
What's the benefit? The trunk is easily computed from $P$ and its number of vertices for a fixed level $e$ is bounded by the degree of $P$ (see Section \ref{sec:struct}), unlike the tree $\Tree(P)$, which can have a number of vertices that grows exponentially with level $e$.

\begin{theorem}
\label{th:trunktotree}
\[
P(x) \equiv 0 \pmod{p^e} \iff  \text{ there exists }  (r,k) \in \Trunk(P) \text{ such that } 
\left\lbrace\begin{array}{l}
x \equiv r \pmod{p^k}  \\
\text{and } \varphi(r,k) \ge e
\end{array}\right.
\]
\end{theorem}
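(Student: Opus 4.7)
The plan is to reduce everything to one clean identity: for every vertex $(r,k) \in \Trunk(P)$, with associated successor polynomial $P_k$ built in the trunk construction, and for every integer $y$,
\[
P(r + p^k y) \;=\; p^{\varphi(r,k)}\, P_k(y).
\]
I would prove this by induction on $k$. The case $k=0$ is tautological. For the inductive step, write $r = r^{(k+1)} = r^{(k)} + p^k r_k$, where $r_k \in \{0,\dots,p-1\}$ is the chosen root of $P_k$ modulo $p$ used to create this vertex; apply the defining relation $P_k(r_k + pZ) = p^{t_{k+1}} P_{k+1}(Z)$ at $Z=z$; and substitute into the inductive hypothesis at $y = r_k + pz$. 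The exponents of $p$ combine correctly since $\varphi(r^{(k+1)}, k+1) = \varphi(r^{(k)}, k) + t_{k+1}$.

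The $(\Leftarrow)$ direction then follows at once: writing $x = r + p^k y$, the identity gives $P(x) = p^{\varphi(r,k)} P_k(y)$, which is divisible by $p^e$ as soon as $\varphi(r,k) \ge e$.

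For the $(\Rightarrow)$ direction, I would use a descent along the base-$p$ expansion of $x = a_0 + a_1 p + a_2 p^2 + \cdots$. Set $r^{(k)} = a_0 + a_1 p + \cdots + a_{k-1} p^{k-1}$ and $x_k = (x - r^{(k)})/p^k$, and argue inductively that either $\varphi(r^{(k)}, k) \ge e$ (in which case $(r^{(k)},k)$ is the desired vertex), or $(r^{(k)},k) \in \Trunk(P)$ and the descent extends one further level. The key point is that when $\varphi(r^{(k)}, k) < e$, the identity forces $P_k(x_k) \equiv 0 \pmod{p^{\,e-\varphi(r^{(k)},k)}}$, so in particular $P_k(a_k) \equiv P_k(x_k) \equiv 0 \pmod p$, which means $a_k$ is a legitimate child in the trunk construction and $(r^{(k+1)}, k+1) \in \Trunk(P)$. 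Since the thickness $t_i$ at every vertex of the trunk satisfies $t_i \ge 1$ (by the remark following Definition \ref{def:thickness}, a root of $P_{i-1}$ mod $p$ contributes at least one power of $p$), the sum $\varphi(r^{(k)},k) \ge k$ grows without bound, so the descent terminates at some level $k \le e$ at which $\varphi(r^{(k)},k) \ge e$.

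The only delicate point is the bookkeeping in Stage~1: one must check that the successive substitutions $X \mapsto r_i + pX$ compose to $X \mapsto r + p^k X$ and that the factored-out powers of $p$ multiply to $p^{\varphi(r,k)}$. Once this identity is in place the two implications of the theorem are immediate consequences, the reverse direction by plugging in and the forward direction by a $k$-step descent that must stop by the $t_i \ge 1$ bound.
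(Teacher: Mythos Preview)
Your proof is correct and follows essentially the same route as the paper: the central identity $P(r+p^kX)=p^{\varphi(r,k)}P_k(X)$ is exactly Lemma~\ref{lem:toptree}, proved the same way by induction along the base-$p$ expansion. The paper's only twist is to handle both directions at once by choosing $(r,k)$ to be the \emph{most recent} trunk ancestor of $(x,e)$ (so that $P_k(y)\not\equiv 0\pmod p$, forcing $p^e\mid P(x)\iff\varphi(r,k)\ge e$ directly), whereas your step-by-step descent for $(\Rightarrow)$ is simply the explicit construction of that same ancestor.
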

Thus, to know if $x$ is a root of $P$ modulo $p^e$, it suffices to check a combinatorial condition on $x$ (modulo a certain $p^k$). 
Since these solutions can be numerous, one might want to simply calculate their number without explicitly listing them all; this will be done in Section \ref{sec:count}.

\medskip

Let us reformulate these results to explain how the solution tree is recovered from the trunk by adding fans.
The \defi{fan} of a vertex $(r,k)$ up to level $h$ is the set of vertices of $\Omega_p$, issued from vertex $(r,k)$ up to level $h$:
\[
\Fan_{\le h}(r,k) = \big\{ (x,l) \in \Omega_p \mid  (r,k) \vartriangleleft (x,l) \text{ and } l \le h \big\}
\]

\begin{figure}[H]
\myfigure{0.6}{
\begin{tikzpicture}[scale=2]

\tikzset{
  line/.style = {
  },
  vector/.style = {
    thick,-latex
  },
  dot/.style = {
    insert path={
      node[scale=3]{.}
    }
  }
}

\def\p{3}
 \path
   (0,0) coordinate (O)
   (-0.5,-1) coordinate (Q)
 ;

\foreach \i in {0,...,2} {
     \path (\i-1,1) coordinate (P\i1);
  }

\foreach \i in {0,...,8} {
     \path ({(\i-4)/3},2) coordinate (P\i2);
  }

\path (Q) edge[line, thick, red, dashed] (O);
\path (Q) edge[line, thick, red, dashed] (-0.75,-0.25);
\path (Q) edge[line, thick, red, dashed] (-0.5,-1.5);
  \foreach \i in {0,1,2} {
     \path (O) edge[line] (P\i1);
  }
  \foreach \i in {0,1,2} {
     \foreach \k in {0,1,2} {
         \pgfmathtruncatemacro\ii{\i+3*\k}
          \path (P\k1) edge[line] (P\ii2);
     }
  }
  \foreach \i in {0,1,2} {
     \foreach \k in {0,1,2} {
         \pgfmathtruncatemacro\ii{\i+3*\k}
          \path (P\k1) edge[line] (P\ii2);
     }
  }

 \path
   (O) [red,dot] node[below right]{$(r,k)$}
   (Q) [dot,red] {}
 ;
  \foreach \i in {0,...,2} {
     \path  (P\i1) [dot] {};
  }
  \foreach \i in {0,...,8} {
     \path  (P\i2) [dot] {};
  }

  \draw[line,thin,gray] (1.75,2) -- ++(2,0) node[midway, black, fill=white, right]{level $h$};
  \draw[line,thin,gray] (1.75,0) -- ++(2,0) node[midway, black, fill=white, right]{level $k$};

  \draw[<->,>=latex,line width=3pt,blue!10] (2,0) -- (2,2) node[midway,right, black] {Fan};
\node[below right, red] at ($(Q)!0.2!(O)$) {Trunk};
\end{tikzpicture}%

}
\caption{A fan.}
\label{fig:fan}
\end{figure}

Thus, Theorem \ref{th:trunktotree} is reformulated as:
\[
\Tree(P) = \bigcup_{(r,k) \in \Trunk(P)} \Fan_{\le  \varphi(r,k)}(r,k)
\]
Since the thickness is always at least $1$, we have $k \leq \varphi(r,k)$, and therefore the vertex $(r,k)$ is indeed an element of $\Fan_{\leq \varphi(r,k)}(r,k)$.
Moreover, if the trunk is a finite tree, then there exists $e\ge0$ such that the equation $P(x)\equiv 0 \pmod{p^e}$ has no integer solutions.

\subsection{First example for the main theorem}

Let us compute the tree of Example \ref{ex:ex1bis}, which is already depicted in Figure \ref{fig:ex0}.

\begin{example}
\label{ex:ex1}
Let $P(X) = (X^2+3)(X^2+3X+9)$ and $p=3$. In the figure below on the left, we have the trunk of $P$, which is a tree with only $3$ vertices (the computation has been done in Example \ref{ex:ex1bis}). The vertex $(0,1)$ has a thickness $t_1=3$ (and thus a tree-top function value of $\varphi_1=t_1=3$); the vertex $(3,2)$ has a thickness $t_2=1$ and thus a tree-top function value of $\varphi_2=t_1+t_2=4$. To obtain the solution tree, below on the right: we start from the trunk of $P$; to the vertex $(0,1)$ we adjoin the fan originating from this vertex that goes up to level $\varphi_1 = 3$; to the vertex $(3,2)$ we adjoin the fan originating from this vertex that goes up to level $\varphi_2 = 4$.

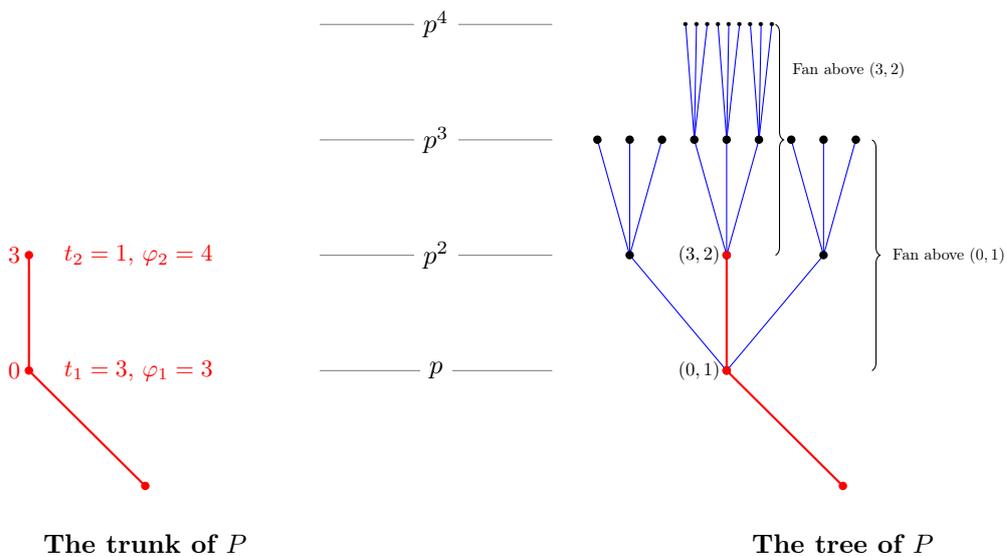
\begin{figure}[H]
\myfigure{0.9}{
\begin{tikzpicture}[scale=1.7]
\usetikzlibrary{decorations.pathreplacing}
\tikzset{
  line/.style = {
  },
  vector/.style = {
    thick,-latex
  },
  dot/.style = {
    insert path={
      node[scale=3]{.}
    }
  },
  smalldot/.style = {
    insert path={
      node[scale=1.5]{.}
    }
  }
}

\begin{scope}
 \path
   (0,0) coordinate (O)
 ;

\foreach \i in {0,...,2} {
     \path (\i-1,1) coordinate (P\i1);
  }

\foreach \i in {0,...,8} {
     \path ({(\i-4)/3},2) coordinate (P\i2);
  }
\foreach \i in {0,...,26} {
     \path ({(\i-13)/9},3) coordinate (P\i3);
  }
\foreach \i in {0,...,80} {
     \path ({(\i-40)/27},4) coordinate (P\i4);
  }

\path (O) edge[line,  thick, red] (P01);
 

\path (P01) edge[line,  thick, red] (P12);


 \path
   (O) [red, dot, red] node[below]{}
   (P01) [dot, red] node[left,scale=0.9]{$0$} node[right=1em,scale=0.9]{$t_1 = 3$, $\varphi_1 = 3$}
   (P12) [dot, red] node[left,scale=0.9]{$3$} node[right=1em,scale=0.9]{$t_2 = 1$, $\varphi_2 = 4$}
 ;

\node at (0,-0.5) {\bf The trunk of $P$};

  \draw[line,thin,gray] (1.5,1) -- ++(2,0) node[midway, black, fill=white,]{$p$};
  \draw[line,thin,gray] (1.5,2) -- ++(2,0) node[midway, black, fill=white, ]{$p^2$};
  \draw[line,thin,gray] (1.5,3) -- ++(2,0) node[midway, black, fill=white, ]{$p^3$};
  \draw[line,thin,gray] (1.5,4) -- ++(2,0) node[midway, black, fill=white, ]{$p^4$};

\end{scope}

\begin{scope}[xshift=6cm]
 \path
   (0,0) coordinate (O)
 ;

\foreach \i in {0,...,2} {
     \path (1*\i-1,1) coordinate (P\i1);
  }

\foreach \i in {0,...,8} {
     \path ({(2.5*\i-5.5)/3},2) coordinate (P\i2);
  }
\foreach \i in {0,...,26} {
     \path ({(2.5*\i-19)/9},3) coordinate (P\i3);
  }
\foreach \i in {0,...,80} {
     \path ({(2.5*\i-59)/27},4) coordinate (P\i4);
  }

\path (O) edge[line, thick, red] (P01);
 
\path (P01) edge[line, blue] (P02);
\path (P01) edge[line, blue] (P22);

\path (P01) edge[line, thick, red] (P12);

\path (P02) edge[line, blue] (P03);
\path (P02) edge[line, blue] (P13);
\path (P02) edge[line, blue] (P23);

\path (P12) edge[line, blue] (P33);
\path (P12) edge[line, blue] (P43);
\path (P12) edge[line, blue] (P53);

\path (P22) edge[line, blue] (P63);
\path (P22) edge[line, blue] (P73);
\path (P22) edge[line, blue] (P83);

\path (P33) edge[line, thin, blue] (P94);
\path (P33) edge[line, thin, blue] (P104);
\path (P33) edge[line, thin, blue] (P114);

\path (P43) edge[line, thin, blue] (P124);
\path (P43) edge[line, thin, blue] (P134);
\path (P43) edge[line, thin, blue] (P144);

\path (P53) edge[line, thin, blue] (P154);
\path (P53) edge[line, thin, blue] (P164);
\path (P53) edge[line, thin, blue] (P174);


  \foreach \i in {0,...,0} {
     \path  (P\i1) [dot] {};
  }
  \foreach \i in {0,...,2} {
     \path  (P\i2) [dot] {};
  }
  \foreach \i in {0,...,8} {
     \path  (P\i3) [dot] {};
  }
  \foreach \i in {9,...,17} {
     \path  (P\i4) [smalldot] {};
  }
 \path
   (O) [red, dot, red] {}
   (P01) [dot, red] {}
   (P12) [dot, red] {} 
 ;

 \path
   (P01)  node[left,scale=0.7]{$(0,1)$}
   (P12) node[left,scale=0.7]{$(3,2)$}
 ;

\draw [decorate,decoration={brace,amplitude=3pt,mirror},xshift=0.5cm,yshift=0pt]
      (-0.25,1) -- ++(0,2) node [midway,right,xshift=.2cm,scale=0.6] {Fan above $(0,1)$};
\draw [decorate,decoration={brace,amplitude=3pt,mirror},xshift=0.5cm,yshift=0pt]
      (-1.08,2) -- ++(0,2) node [pos=0.8,right,xshift=.15cm,scale=0.6] {Fan above $(3,2)$};

\node at (0,-0.5) {\bf The tree of $P$};

\end{scope}

\end{tikzpicture}%

}
\caption{The tree from the trunk.}
\label{fig:ex1}
\end{figure}

From the tree, we read the solutions of the equation $P(x)\equiv 0 \pmod{p^e}$ for different values of $e$. The formula from Corollary \ref{cor:count} will allow us to count the number $N_e$ of solutions without explicitly enumerating them.  
\[
\begin{array}{ccc}
	p^e & \text{ solutions } & N_e \\ \hline
	3^1 & 0 & 1 \\
	3^2 & 0,3,6 & 3 \\
	3^3 & 0, 3, 6, 9, 12, 15, 18, 21, 24 & 9\\
	3^4 & 3, 12, 21, 30, 39, 48, 57, 66, 75 & 9\\
\end{array}
\]

For $e \ge 5$, the equation has no solutions.
\end{example}

\subsection{Second example}

We will consider an example in which the congruence $P(x)\equiv0\pmod{p^e}$ has solutions for every $e \ge 1$. One situation where this happens is when there is a simple root modulo $p$, that is $P(x_1)\equiv0\pmod p$ but $P'(x_1)\not\equiv0\pmod p$. Then Hensel’s lemma (see \cite{Apo1976} or \cite{NZM}) shows that this root can be lifted indefinitely to solutions modulo $p^2,p^3,\dots$, thereby producing an infinite branch in the solution tree whose vertices all have thickness $1$ (see Lemma \ref{lem:taylor}).

\begin{theorem}[Hensel's Lemma]
	Let $P(x)\in\mathbb{Z}[x]$ and $x_1\in\mathbb{Z}$ be such that
	$P(x_1)\equiv0\pmod{p}$ and $P'(x_1)\not\equiv0\pmod{p}$.
	Then, for every integer $e\ge1$, there exists a unique integer $x_e$ (determined modulo $p^e$) satisfying	
	$P(x_e)\equiv0\pmod{p^e}$ and $x_e\equiv x_1\pmod{p}$.
\end{theorem}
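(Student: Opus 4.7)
The plan is to argue by induction on $e$, at each step constructing $x_{e+1}$ from $x_e$ by a uniquely determined ``small correction'' of size $p^e$. The base case $e=1$ is given by hypothesis. For the inductive step, suppose $x_e$ has already been built with $P(x_e)\equiv 0\pmod{p^e}$ and $x_e\equiv x_1\pmod p$, uniquely modulo $p^e$; I look for the next lift in the form $x_{e+1}=x_e+t p^e$ with $t\in\Zz$, and the task reduces to pinning down $t$ modulo $p$.

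The key tool is the integer Taylor expansion of $P$ around $x_e$, which gives
\[
P(x_e+t p^e)=P(x_e)+P'(x_e)\,t\,p^e+p^{2e} R(t)
\]
for some $R(t)\in\Zz[t]$ collecting the higher-order terms. Writing $P(x_e)=p^e m$ with $m\in\Zz$ and noting that $2e\ge e+1$ for $e\ge 1$, the condition $P(x_{e+1})\equiv 0\pmod{p^{e+1}}$ reduces, after dividing by $p^e$, to the linear congruence
\[
m+P'(x_e)\,t\equiv 0\pmod p.
\]
Since $x_e\equiv x_1\pmod p$, we have $P'(x_e)\equiv P'(x_1)\not\equiv 0\pmod p$, so $P'(x_e)$ is invertible modulo $p$ and this congruence has the unique solution $t\equiv -m\,P'(x_e)^{-1}\pmod p$. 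This simultaneously yields existence of $x_{e+1}$ and its uniqueness modulo $p^{e+1}$: any other lift $x'$ with $x'\equiv x_1\pmod p$ must, by the inductive uniqueness, satisfy $x'\equiv x_e\pmod{p^e}$, hence be of the form $x_e+t' p^e$, and the computation forces $t'\equiv t\pmod p$.

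There is essentially no obstacle here — this is the classical Hensel lifting argument — but the one step worth being careful about is that the inequality $2e\ge e+1$ really does absorb all higher-order Taylor terms into the modulus $p^{e+1}$. This is precisely the quantitative reason the simple-root hypothesis $P'(x_1)\not\equiv 0\pmod p$ suffices to drive the induction, whereas the multiple-root case requires the finer trunk/thickness machinery developed earlier in the paper.
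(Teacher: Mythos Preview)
Your proof is correct and follows essentially the same approach as the paper's: induction on $e$, Taylor expansion around the current approximate root, and solving the resulting linear congruence using the invertibility of $P'(x_1)$ modulo $p$. If anything, your version is more complete, since you carry out the general inductive step and the uniqueness argument explicitly, whereas the paper only details the passage from $e=1$ to $e=2$ and leaves the rest to the reader.
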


The idea of the proof is a variant of Newton’s method for finding roots. 
We proceed by induction on the exponent $e$.  The first step is a Taylor expansion around the known root $x_1$:
$$
P(x_1 + h p) \equiv P(x_1)+h p P'(x_1)\pmod{p^2}.
$$

Since $p\mid P(x_1)$, write $P(x_1)=p\cdot C$ and let $D$ be an inverse of $P'(x_1)$ modulo $p$.  Then set
$h_0  = -\frac{P(x_1)}{p} D = -CD$.
By construction,
$$
P\big(x_1 + h_0 p\big)
\equiv P(x_1) + h_0 p P'(x_1)
\equiv pC\big(1 - D P'(x_1)\big)
\equiv0\pmod{p^2},
$$
so $x_2 = x_1 + h_0 p$ is indeed a root modulo $p^2$. From there one continues inductively to lift to all higher powers of $p$.

\begin{example}
\label{ex:ex2}
Let $P(X)= X(X-1)^2+5^2$ and consider $p=5$.

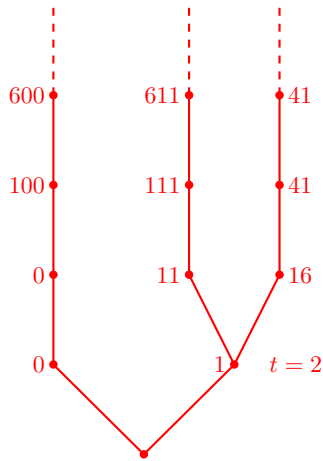
\begin{figure}[H]
	\myfigure{0.7}{
\begin{tikzpicture}[scale=1.7]

\tikzset{
  line/.style = {
  },
  vector/.style = {
    thick,-latex
  },
  dot/.style = {
    insert path={
      node[scale=3]{.}
    }
  },
  smalldot/.style = {
    insert path={
      node[scale=1.5]{.}
    }
  }
}

\begin{scope}

 \path
   (0,0) coordinate (P0)
   (-1,1) coordinate (P1)
   (-1,2) coordinate (P2)
   (-1,3) coordinate (P3)
   (-1,4) coordinate (P4)
	(-1,5) coordinate (P5)
;

\def\esp{0.5}
 \path
   (1,1) coordinate (Q1)
   (1-\esp,2) coordinate (Q12)
   (1-\esp,3) coordinate (Q13)
   (1-\esp,4) coordinate (Q14)
	(1-\esp,5) coordinate (Q15)
   (1+\esp,2) coordinate (Q22)
   (1+\esp,3) coordinate (Q23)
   (1+\esp,4) coordinate (Q24)
	(1+\esp,5) coordinate (Q25)
;

\path 
(P0) edge[line,  thick, red] (P1)
(P1) edge[line,  thick, red] (P2)
(P2) edge[line,  thick, red] (P3)
(P3) edge[line,  thick, red] (P4)
(P4) edge[line,  thick, red,dashed] (P5)
;
\path 
(P0) edge[line,  thick, red] (Q1)
(Q1) edge[line,  thick, red] (Q12)
(Q12) edge[line,  thick, red] (Q13)
(Q13) edge[line,  thick, red] (Q14)
(Q14) edge[line,  thick, red,dashed] (Q15)
; 
\path 
(Q1) edge[line,  thick, red] (Q22)
(Q22) edge[line,  thick, red] (Q23)
(Q23) edge[line,  thick, red] (Q24)
(Q24) edge[line,  thick, red,dashed] (Q25)
;


 \path
   (P0) [red, dot, red] node[below]{}
   (P1) [dot, red] node[left,scale=0.9]{$0$} 
   (P2) [dot, red] node[left,scale=0.9]{$0$} 
   (P3) [dot, red] node[left,scale=0.9]{$100$} 
   (P4) [dot, red] node[left,scale=0.9]{$600$} 
 ;

\path
   (Q1) [red, dot, red] node[left,scale=0.9]{$1$} node[right=1em,scale=0.9]{$t = 2$}
   (Q12) [dot, red] node[left,scale=0.9]{$11$} 
   (Q13) [dot, red] node[left,scale=0.9]{$111$} 
   (Q14) [dot, red] node[left,scale=0.9]{$611$} 
   (Q22) [dot, red] node[right,scale=0.9]{$16$} 
   (Q23) [dot, red] node[right,scale=0.9]{$41$} 
   (Q24) [dot, red] node[right,scale=0.9]{$41$} 
 ;



\end{scope}

\end{tikzpicture}%

	}
	\caption{Trunk with infinite branches. All non-marked thicknesses equal $1$.}
	\label{fig:ex2}
\end{figure}

The root $0$ is a simple root of $P$ modulo $5$, meaning that  
$P'(0) \not\equiv 0 \pmod{5}$.   
By Hensel’s lemma, this root can be lifted indefinitely to solutions modulo each $p^e$.  
This results in an infinite branch of the trunk (on the left in Figure \ref{fig:ex2}). At each level of this branch, the thickness is $1$, so the part of the solution tree corresponding to this branch coincides with the branch itself. In summary, for each level $e$, there is a unique solution $x$ satisfying $x \equiv 0 \pmod{p}$ and $P(x) \equiv 0 \pmod{p^e}$. 

The root $1$ is not a simple root; it has thickness $2$. After level $1$, the trunk splits into two infinite branches, each of thickness $1$. The solution tree can be recovered from the trunk by Theorem \ref{th:trunktotree} (but is not pictured in Figure \ref{fig:ex2}).

	\[
\begin{array}{cccc}
	p^e & \text{ solution above $0$} & \text{ solutions above $1$} & N_e \\ \hline
	5^1 & 0 & 1 & 2 \\
	5^2 & 0 & 1, 6, 11, 16, 21 & 6 \\
	5^3 & 100 & 11, 16, 36, 41, 61, 66, 86, 91, 111, 116 & 11\\
	5^4 & 600 & 41, 111, 166, 236, 291, 361, 416, 486, 541, 611 & 11\\
\end{array}
\]
For all $e\ge5$, the number of solutions remains $N_e=11$.

\end{example}

\begin{remark*}
There may exist infinite branches with vertices of thickness greater than 1; an example will be given in Section \ref{sec:degtwo}. Such infinite branches correspond to multiple roots of the polynomial $P(X)$ in the ring  $\Zz_p$ of $p$-adic integers, and are therefore associated with multiple factors in the decomposition of $P(X)$ into irreducible factors in $\Zz[X]$. It is also possible to detect whether a branch beginning with vertices of thickness greater than 1 will extend to infinity; see \cite[Section 3]{DS2020}.
\end{remark*}

\section{Thickness}

In this section, we provide further information and properties about the thickness.

\subsection{Characterization by Taylor's formula}

\begin{lemma}
\label{lem:taylor}
\[
t = \min_{i\ge0} \val_p \left( \frac{P^{(i)}(r)}{i!} p^i\right)
\]
And in particular $t \le d$.
\end{lemma}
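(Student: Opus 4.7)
The plan is to expand $P(r+pX)$ by Taylor's formula around the point $r$ and then read off the thickness coefficient by coefficient. Concretely, writing
\[
P(r+pX) \;=\; \sum_{i=0}^{d} \frac{P^{(i)}(r)}{i!}\,(pX)^i \;=\; \sum_{i=0}^{d} c_i\, X^i,
\qquad \text{with } c_i = \frac{P^{(i)}(r)}{i!}\, p^i,
\]
the thickness $t$, being by definition the largest exponent of $p$ that may be factored out of the polynomial $P(r+pX) \in \Zz[X]$, is precisely $\min_i \val_p(c_i)$. This delivers the claimed formula once we know each $c_i$ lies in $\Zz$.

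The one point that genuinely needs justification is the integrality of the rational number $\frac{P^{(i)}(r)}{i!}$. I would argue this by observing that it is the coefficient of $Y^i$ in the Taylor expansion of $P(r+Y)$ at $Y=0$; since $r \in \Zz$ and $P \in \Zz[X]$, the polynomial $P(r+Y)$ lies in $\Zz[Y]$, so all its coefficients are integers. (Equivalently, one may write $\frac{P^{(i)}(r)}{i!} = \sum_{j\ge i} \binom{j}{i} a_j r^{j-i}$ directly via the binomial theorem.)

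For the bound $t \le d$, I would exploit that the substitution $X \mapsto r+Y$ is a $\Zz$-algebra automorphism of $\Zz[X]$, with inverse $Y \mapsto X-r$. The standing assumption $p \nmid P(X)$ in $\Zz[X]$ is therefore equivalent to $p \nmid P(r+Y)$ in $\Zz[Y]$, which forces at least one of the Taylor coefficients $\frac{P^{(i)}(r)}{i!}$, for some index $0 \le i \le d$, to have $p$-valuation zero. For such an $i$ we get $\val_p(c_i) = i \le d$, and hence $t = \min_i \val_p(c_i) \le d$.

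No real obstacle arises; the argument is essentially a bookkeeping exercise on Taylor coefficients. The only subtleties to watch are the integrality of $\frac{P^{(i)}(r)}{i!}$ and the invariance of the property ``$p$ does not divide'' under integer translation of the variable, both of which follow immediately from Taylor's formula.
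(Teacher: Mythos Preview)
Your proof is correct and follows essentially the same route as the paper: expand $P(r+pX)$ via Taylor's formula and identify the thickness as the minimum $p$-valuation of the resulting coefficients. You actually supply more detail than the paper does---the paper simply asserts the integrality of $\frac{P^{(i)}(r)}{i!}$ without justification, and it does not spell out the argument for $t\le d$ at all, whereas your use of the translation automorphism to transport the hypothesis $p\nmid P(X)$ to $p\nmid P(r+Y)$ makes that bound explicit.
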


We recall that $\val_p(x)$, the \defi{valuation} at $p$ of an integer $x$, is the largest exponent $i$ such that $p^i$ divides $x$. For example $\val_p(p^i) = i$. Also, for any $r \in \Zz$ and $i\ge0$, $\frac{P^{(i)}(r)}{i!}$ is an integer.
As an application of this lemma, $t = 1$ iff $r$ is a simple root modulo $p$, i.e.~$P(r) \equiv 0 \pmod{p}$ and $P'(r) \not\equiv 0 \pmod{p}$.

\begin{proof}
The Taylor formula for $P$ around the root $r$ is written:
\[
P(r+X) = P(r) + P'(r)X + \frac{P''(r)}{2!}X^2 + \cdots + \frac{P^{(i)}(r)}{i!} X^i + \cdots + \frac{P^{(d)}(r)}{d!} X^d
\]
This gives:
\[
P(r+pX) = P(r) + P'(r)pX + \frac{P''(r)}{2!} p^2 X^2 + \cdots + \frac{P^{(i)}(r)}{i!} p^i X^i + \cdots 
\]

Let $t$ be the thickness of $P$ at $r$ and $t' = \min_{i\ge0} \val_p \left( \frac{P^{(i)}(r)}{i!} p^i\right)$.

Since $p^t$ divides the polynomial $P(r+pX)$, then $p^t$ divides all the coefficients $\frac{P^{(i)}(r)}{i!} p^i$ of $P(r+pX)$, thus $t \le t'$.
Conversely, by Taylor's formula, $p^{t'}$ divides all the coefficients of $P(r+pX)$, hence $t' \le t$.
\end{proof}

\begin{lemma}
\label{lem:multiplicity}
The thickness $t$ of $P$ at $r$ is less than or equal to the multiplicity $\mult(r)$ of the root $r$ as a root of the polynomial $\reduc{P} \in \Zz/p\Zz[X]$.
\end{lemma}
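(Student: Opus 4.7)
The plan is to combine Lemma \ref{lem:taylor} with a lifting argument. That lemma rewrites
\[
t = \min_{i \ge 0} \left( \val_p\!\left(\frac{P^{(i)}(r)}{i!}\right) + i \right),
\]
so to bound $t$ from above by $m := \mult(r)$ it suffices to exhibit one index $i$ for which this quantity equals $m$. The natural candidate is $i = m$, and the heart of the argument will be to show that the integer $\frac{P^{(m)}(r)}{m!}$ has $p$-adic valuation $0$.

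To do this, I would first translate the multiplicity hypothesis into a factorization over $\Zz$. Since $r$ is a root of multiplicity $m$ of $\reduc{P}$, we can write $\reduc{P}(X) = (X-r)^m \reduc{S}(X)$ in $\Zz/p\Zz[X]$ with $\reduc{S}(r) \ne 0$. Lifting $\reduc{S}$ to some $S(X) \in \Zz[X]$ yields
\[
P(X) = (X-r)^m S(X) + p\, T(X)
\]
for a suitable $T \in \Zz[X]$, where by construction $S(r) \not\equiv 0 \pmod{p}$.

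Next I would differentiate $m$ times and evaluate at $X = r$. Among the $m$-th derivatives of the terms of $(X-r)^m S(X)$, only the contribution obtained by differentiating $(X-r)^m$ exactly $m$ times survives at $X = r$, and it equals $m!\, S(r)$. Hence
\[
P^{(m)}(r) = m!\, S(r) + p\, T^{(m)}(r),
\]
so that $\frac{P^{(m)}(r)}{m!} = S(r) + p\, \frac{T^{(m)}(r)}{m!}$. Since $S(r)$ is not divisible by $p$, this integer has $p$-adic valuation $0$, and therefore the index $i = m$ contributes $0 + m = m$ to the minimum in Lemma \ref{lem:taylor}, which gives $t \le m$.

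The one genuinely delicate point will be keeping track of integrality: expressions such as $\frac{P^{(m)}(r)}{m!}$ and $\frac{T^{(m)}(r)}{m!}$ must be interpreted as integers, which is guaranteed by the same general fact already invoked in Lemma \ref{lem:taylor} (the Taylor coefficients of a polynomial in $\Zz[X]$ at an integer point lie in $\Zz$). Once this is acknowledged, the rest of the proof is essentially algebraic bookkeeping around the degree-$m$ term of the Taylor expansion.
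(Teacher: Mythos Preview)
Your argument is correct. The paper takes a more direct route: after translating so that $r=0$ and writing $P(X)=\sum_i a_i X^i$, the condition $p^t \mid P(pX)$ forces $p \mid a_i$ for every $i<t$, so $\reduc{P}(X)$ is divisible by $X^t$ and hence $\mult(0)\ge t$. In other words, the paper argues \emph{from} the thickness \emph{to} a lower bound on the multiplicity via a one-line coefficient divisibility check, whereas you argue \emph{from} the multiplicity \emph{to} an upper bound on the thickness by producing a specific witness ($i=m$) in the minimum of Lemma~\ref{lem:taylor}. Your path is a bit longer --- it routes through Lemma~\ref{lem:taylor}, a lift of the factorization to $\Zz[X]$, and the Leibniz rule --- but it has the small conceptual payoff of identifying exactly which Taylor coefficient realizes the bound; the paper's version is essentially the contrapositive and avoids differentiation altogether.
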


We will also prove in Lemma \ref{lem:noderule} that the thickness can only decrease as we
ascend the tree.

\begin{proof}
To simplify the proof, and without loss of generality, we can assume $r=0$.
Let us write $P(X)= \sum_{0 \le i \le d} {a_i X^i}$.
Then $p^t$ divides $P(pX)$, so $p^t$ divides the $a_i p^i$ (for $0 \le i \le d$).
Thus $p$ divides $a_i$ for $i < t$, so after reduction modulo $p$,
$\reduc{P}(X) = \reduc{a_t} X^t + \cdots + \reduc{a_d} X^d$ factors through $X^t$.
Thus the multiplicity of $r=0$ as a root of $\reduc P$ is greater than or equal to $t$.
\end{proof}


\subsection{Residual degree}

\begin{definition}
	Let $P\in\Zz[X]$ of thickness $t$ at $r\in\Zz$, associated with the decomposition $P(r+pX) = p^t Q(X)$.
	The \defi{residual degree} of $P$ at $r$, denoted $s$, is the degree of the reduction of $Q$ in $\Zz/p\Zz[X]$. In other words, $s = \deg{\reduc{Q}}$.
\end{definition}

\begin{lemma}
	\label{lem:output}
	The residual degree $s$ is at most $t$, and is the largest integer $i\ge0$ such that
	\[
	\val_p \left( \frac{P^{(i)}(r)}{i!} p^i \right) = t.
	\]
\end{lemma}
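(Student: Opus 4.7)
The plan is to expand $P(r+pX)$ via Taylor's formula, extract the coefficients of $Q(X)$ by dividing by $p^t$, and read off the degree of $\reduc{Q}$ from the valuations of those coefficients. Everything follows from the computation already used in Lemma \ref{lem:taylor}, so no new ingredient is needed.

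First I would write, as in the proof of Lemma \ref{lem:taylor},
\[
P(r+pX) = \sum_{i=0}^{d} \frac{P^{(i)}(r)}{i!}\,p^i X^i,
\]
so that the coefficient of $X^i$ in $Q(X) = p^{-t}P(r+pX)$ is
\[
c_i \;=\; \frac{1}{p^t}\cdot\frac{P^{(i)}(r)}{i!}\,p^i.
\]
By definition of the thickness $t$, each $c_i$ is an integer, and
$\val_p(c_i) = \val_p\!\left(\frac{P^{(i)}(r)}{i!}\,p^i\right) - t \ge 0$.

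Next I would observe that $s = \deg\reduc{Q}$ is the largest $i$ for which $\reduc{c_i}\ne 0$ in $\Zz/p\Zz$, i.e.\ the largest $i$ such that $\val_p(c_i)=0$, which translates exactly to the largest $i$ with
\[
\val_p\!\left(\frac{P^{(i)}(r)}{i!}\,p^i\right) = t.
\]
Such an $i$ exists because Lemma \ref{lem:taylor} says the minimum of these valuations is $t$, hence the minimum is attained.

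Finally, to prove $s \le t$, I would use that $\frac{P^{(i)}(r)}{i!}$ is an integer, so $\val_p\!\left(\frac{P^{(i)}(r)}{i!}\right) \ge 0$, whence
\[
\val_p\!\left(\frac{P^{(i)}(r)}{i!}\,p^i\right) \;\ge\; i.
\]
Therefore, for any $i > t$, this valuation is strictly greater than $t$, so $p \mid c_i$ and the coefficient of $X^i$ in $\reduc{Q}$ vanishes. Hence $s \le t$.

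There is no real obstacle: the lemma is essentially a bookkeeping consequence of Taylor's formula once Lemma \ref{lem:taylor} has been established. The only mildly delicate point is justifying that the $c_i$ are integers for $i < t$ (where the formal expression $p^{i-t}$ has a negative exponent); this is handled by invoking the maximality of $t$ in the definition of thickness, which forces $p^{t-i}$ to divide $\frac{P^{(i)}(r)}{i!}$ in that range.
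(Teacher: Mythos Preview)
Your proof is correct and follows essentially the same route as the paper's: expand $P(r+pX)$ via Taylor, identify the coefficients of $Q$, and read off $s$ as the largest index where the valuation equals $t$, then get $s\le t$ from $\val_p\bigl(\tfrac{P^{(i)}(r)}{i!}\bigr)\ge 0$. The only cosmetic difference is that the paper first reduces to $r=0$ and writes $a_i$ for the Taylor coefficients, whereas you keep $r$ general throughout.
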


\begin{example}
	\label{ex:output}
	Let $P(X)= X^3 + pX^2 + pX$. The thickness of the root $r=0$ is $t=2$ because $P(pX) = p^2 Q(X)$ where $Q(X) = pX^3+pX^2+X$. The residual degree is $s=1$ because the reduction of $Q$ modulo $p$ is of degree $1$.
\end{example}

\begin{proof}
	To simplify the writing of the proof, we can again assume without loss of generality that $r=0$ and write $P(pX) = p^t Q(X)$. Let $P(X) = \sum_{0 \le i \le d} a_i X^i$. By Taylor's formula, $a_i = \frac{P^{(i)}(r)}{i!}$. By hypothesis $p^t$ divides $P(pX)$, so $p^t \divides a_i p^i$ for all $0 \le i \le d$.
	Thus:
	\begin{align*}
		\deg \reduc{Q} = s
		&\iff p^{t+1} \notdivides a_s p^s \text{ and } p^{t+1} \divides a_i p^i \text{ for all } i>s \\
		&\iff \val_p(a_s p^s) = t \text{ and } \val_p(a_i p^i) > t \text{ for all } i>s \\
		&\iff s \text{ is the largest integer such that } \val_p(a_i p^i) = t 
	\end{align*}
	Finally $t \ge s$ because:
	\[
	t 
	= \min_{0 \le i \le d} \left[ \val_p \left( \frac{P^{(i)}(r)}{i!} \right) + i \right]
	= \val_p \left( \frac{P^{(s)}(r)}{s!} \right) + s
	\]
	
\end{proof}

\subsection{Node rule}

\begin{lemma}[Node rule]
	\label{lem:noderule}
	Let $(r,k) \in \Trunk(P)$ with thickness $t$ and residual degree $s$.
	Let $(r_i,k+1) \in \Trunk(P)$ of thickness $t_i$ be the children of $(r,k)$, $i=1,\ldots,l$.
	Then:
	\[ 
	t_1+t_2+\cdots+t_l \le s \le t
	\]
\end{lemma}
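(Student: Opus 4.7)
The plan is to separate the chain of inequalities into two bounds. The right-hand inequality $s \le t$ is already contained in Lemma \ref{lem:output}, so the only substantive content is the left-hand bound $t_1+t_2+\cdots+t_l \le s$.

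To prove it, let $Q$ denote the polynomial of the trunk construction attached to the vertex $(r,k)$ (in the notation of the recursive definition, $Q = P_k$). By the definition of the residual degree at $(r,k)$ we have $s = \deg \reduc{Q}$, and the children $(r_i,k+1)$ of $(r,k)$ correspond bijectively to the distinct roots $\rho_1,\ldots,\rho_l \in \llbracket 0,p-1 \rrbracket$ of $\reduc{Q}$ in $\Ff_p$, with $r_i = r + p^k \rho_i$ and with thickness $t_i$ defined by the decomposition $Q(\rho_i + pX) = p^{t_i} R_i(X)$. A point of care: the thickness $t$ at $(r,k)$ that appears in the statement is inherited from the decomposition performed at the parent of $(r,k)$, whereas the $t_i$ and the residual degree $s$ all refer to the single polynomial $Q$ living at $(r,k)$ itself.

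Once this is set up, the inequality reduces directly to Lemma \ref{lem:multiplicity}. Applied to the polynomial $Q$ at each $\rho_i$, it gives $t_i \le m_i$, where $m_i$ denotes the multiplicity of $\rho_i$ as a root of $\reduc{Q}$ in $\Ff_p[X]$. Summing over the distinct roots $\rho_1,\ldots,\rho_l$ and using that the sum of the multiplicities of the roots of a nonzero polynomial over a field is bounded by its degree, I obtain
\[
\sum_{i=1}^{l} t_i \;\le\; \sum_{i=1}^{l} m_i \;\le\; \deg \reduc{Q} \;=\; s.
\]
The reduction $\reduc{Q}$ is genuinely nonzero because the maximality built into the definition of thickness ensures that $p$ does not divide $Q$ in $\Zz[X]$, so the degree of $\reduc{Q}$ is well-defined and equal to $s$.

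The main obstacle is essentially bookkeeping rather than any genuine analytic difficulty: one has to keep track that $s$ and the $t_i$ all refer to the polynomial $Q$ attached to the node $(r,k)$, while $t$ itself records a quantity computed one level higher. Once this identification is clear, both halves of the displayed inequality are immediate consequences of lemmas already proven.
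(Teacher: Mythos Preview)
Your proof is correct and follows essentially the same approach as the paper: both arguments invoke Lemma~\ref{lem:output} for the inequality $s \le t$, apply Lemma~\ref{lem:multiplicity} to bound each $t_i$ by the multiplicity $\mu_i$ of $\rho_i$ as a root of $\reduc{Q}$, and then sum the multiplicities to obtain $\sum t_i \le \sum \mu_i \le \deg \reduc{Q} = s$. Your explicit remarks on the bookkeeping (that $t$ is inherited from the parent while $s$ and the $t_i$ all refer to the polynomial $Q = P_k$ at the node, and that $\reduc{Q} \neq 0$) are accurate and helpful clarifications, but the underlying argument is the same.
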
	

\begin{figure}[H]
	\myfigure{0.6}{
\begin{tikzpicture}[scale=2]

\tikzset{
  line/.style = {
  },
  vector/.style = {
    thick,-latex
  },
  dot/.style = {
    insert path={
      node[scale=3]{.}
    }
  }
}

\begin{scope}

 \path
   (0,0) coordinate (P0)
   (-1,1) coordinate (P1)
   (-0.5,1) coordinate (P2)
   (0,1) coordinate (P3)
   (0.5,1) coordinate (P4)
 ;

\path 
  (P0) edge[line,  thick, red] (0,-0.5)
  (P0) edge[line,  thick, red] (P1)
  (P0) edge[line,  thick, red] (P2)
  (P0) edge[line,  thick, red] (P4)
;


 \path
   (P0) [dot, red] node[black,below right]{$t$}
   (P1) [dot, red] node[black,above]{$t_1$}
   (P2) [dot, red] node[black,above]{$t_2$}
   (P4) [dot, red] node[black,above]{$t_l$}
 ;

\node[scale=1.5] at (0.05,0.8) {$\cdots$};


\end{scope}

\end{tikzpicture}%

	} 
	\caption{The node rule: $t_1+t_2+\cdots+t_l \le t$.}
	\label{fig:noderule}
\end{figure}
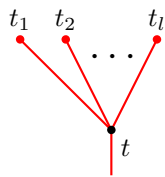

\begin{proof}
	Let $P(r+pX) = p^t Q(X)$.
	Let $r_i = r + \rho_i p^k$, $i=1,\ldots,l$ where $\rho_i$ are the roots of $Q$ modulo $p$.
	Consider the decomposition of the reduction of $Q$ modulo $p$ according to its roots:
	\[
	\reduc{Q}(X) = (X-\rho_1)^{\mu_1}\cdots (X-\rho_l)^{\mu_l} I(X) \in \Zz/p\Zz[X]
	\]
	where $I(X)$ has no roots modulo $p$.
	By Lemma \ref{lem:multiplicity}, the thickness is less than or equal to the multiplicity: $t_i \le \mu_i$.
	Thus, remembering that $\deg \reduc{Q}$ is by definition the residual degree $s$ and that $s \le t$ (Lemma \ref{lem:output}):
	\[
	\sum_{1 \le i \le l} t_i 
	\le \sum_{1 \le i \le l} \mu_i 
	\le \deg \reduc{Q}
	= s 
	\le t
	\]
\end{proof}

\section{Construction of the solution tree from the trunk}

Now that we have defined the trunk and explained its main properties, it is time to prove Theorem \ref{th:trunktotree} which explains how to compute the tree $\Tree(P)$ of solutions $P(x) \equiv 0 \pmod{p^e}$ from the trunk $\Trunk(P)$, via the formula:
\[
P(x) \equiv 0 \pmod{p^e} \iff  \text{ there exists }  (r,k) \in \Trunk(P) \text{ such that } 
x \equiv r \pmod{p^k} \text{ and } \varphi(r,k) \ge e
\]

\subsection{Tree-top function}

For $(r,k) \in \Trunk(P)$, let $t_1,\ldots,t_k$ be the thicknesses associated with the path from the root to the vertex $(r,k)$. Let $\varphi = \varphi(r,k) = t_1 + \cdots +t_k$ be the value of the tree-top function at this vertex.

\begin{lemma}
	\label{lem:toptree}
There exists a decomposition
	\[
	P(r+p^k X) = p^{\varphi} Q(X)	
	\]
	where $Q(X) \in \Zz[X]$. 
\end{lemma}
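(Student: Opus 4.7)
The natural approach is induction on the level $k$, with the statement strengthened so that the polynomial $Q(X)$ appearing in the decomposition is identified with the successor polynomial $P_k$ produced by the inductive construction of $\Trunk(P)$. In other words, I would prove that for every $(r,k) \in \Trunk(P)$, if $P_k$ is the successor polynomial attached to $(r,k)$ in the construction of the trunk, then
\[
P(r + p^k X) = p^{\varphi(r,k)} P_k(X),
\]
and then set $Q = P_k$. Since $P_k \in \Zz[X]$ by construction, this gives the claim.

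For the base case $k=0$, the vertex is $(0,0)$, with $\varphi(0,0)=0$ and $P_0 = P$, so the identity $P(0+X) = p^0 P_0(X)$ is trivial. For the inductive step, suppose $(r,k) \in \Trunk(P)$ satisfies $P(r + p^k X) = p^{\varphi(r,k)} P_k(X)$, and let $(r', k+1)$ be a child of $(r,k)$ in $\Trunk(P)$, so that $r' = r + p^k r_k$ with $r_k \in \llbracket 0, p-1 \rrbracket$ a root of $P_k$ modulo $p$, and $P_k(r_k + pX) = p^{t_{k+1}} P_{k+1}(X)$ by definition of the trunk. Substitute $X \mapsto r_k + pX$ in the induction hypothesis:
\[
P\bigl(r + p^k(r_k + pX)\bigr) = p^{\varphi(r,k)} P_k(r_k + pX) = p^{\varphi(r,k) + t_{k+1}} P_{k+1}(X).
\]
Since $r + p^k(r_k + pX) = r' + p^{k+1} X$ and $\varphi(r', k+1) = \varphi(r,k) + t_{k+1}$, this is exactly the required decomposition at $(r', k+1)$, closing the induction.

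There is no serious obstacle here: the lemma is essentially a bookkeeping statement, unwinding the recursive definition of the successor polynomials and of $\varphi$ one level at a time. The only thing to be careful about is the algebraic substitution showing that $r + p^k(r_k + pX) = r' + p^{k+1}X$, and the fact that the power of $p$ factored out at each step is precisely the thickness recorded at that vertex, so that the accumulated power after $k$ steps is $t_1 + \cdots + t_k = \varphi(r,k)$.
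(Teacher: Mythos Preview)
Your proof is correct and essentially identical to the paper's: both argue by induction on $k$, substituting $X \mapsto r_k + pX$ into the identity at level $k$ to pass to level $k+1$, and both identify $Q$ with the successor polynomial $P_k$. The paper writes out the first two steps explicitly and then invokes induction, while you state the induction more formally, but the content is the same.
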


\begin{proof}
Let $r= r_0+r_1p+r_2p^2+\cdots+r_{k-1}p^{k-1}$ be the $p$-adic expansion of $r$.
Then
$P(r_0+pX) = p^{t_1} P_1(X)$ where $P_1$ denotes the successor of $P$ for the root $r_0$,
hence 
\[
P(r_0+r_1p+p^2X) = P\big(r_0+p (r_1+pX)\big) = p^{t_1} P_1(r_1+pX) = p^{t_1+t_2}P_2(X)
\]
where $P_2$ is the successor of $P_1$ for the root $r_1$.
By induction, $P(r+p^k X) = p^{t_1+\cdots+t_k}  P_{k}(X)$ with $P_{k}(X) \in \Zz[X]$.
\end{proof}

\subsection{Proof of Theorem \ref{th:trunktotree}}

Let $(x,e) \in \Omega_p$. We want to know if $P(x) \equiv 0 \pmod{p^e}$, that is if $(x,e) \in \Tree(P)$.
Let $(r,k) \in \Trunk(P)$ be the most recent ancestor of $(x,e)$ belonging to the trunk of $P$. We have $x = r + p^{k} y$ (where $y\in\Zz$).

\begin{figure}[H]
	\myfigure{0.7}{
\begin{tikzpicture}[scale=2]

\tikzset{
  line/.style = {
  },
  vector/.style = {
    thick,-latex
  },
  dot/.style = {
    insert path={
      node[scale=3]{.}
    }
  }
}

\def\p{3}
 \path
   (0,0) coordinate (O)
   (0.5,-1) coordinate (Q)
 ;

\foreach \i in {0,...,2} {
     \path (\i-1,1) coordinate (P\i1);
  }

\foreach \i in {0,...,8} {
     \path ({(\i-4)/3},2) coordinate (P\i2);
  }


  \foreach \i in {0,1,2} {
     \path (O) edge[line,gray] (P\i1);
  }

\path (Q) edge[line, thick, red] (O);
\path (O) edge[line, thick, red] (P21);
\path (Q) edge[line, thick, dashed, red] (1,-0.25);
\path (Q) edge[line, thick, dashed, red] (0.5,-1.25);

  \foreach \i in {0,1,2} {
     \foreach \k in {0,1,2} {
         \pgfmathtruncatemacro\ii{\i+3*\k}
          \path (P\k1) edge[line,gray] (P\ii2);
     }
  }
  \foreach \i in {0,1,2} {
     \foreach \k in {0,1,2} {
         \pgfmathtruncatemacro\ii{\i+3*\k}
          \path (P\k1) edge[line,gray] (P\ii2);
     }
  }

\path (O) edge[line,  thick, blue] (P01); 
\path (P01) edge[line,  thick, blue] (P22); 


  \foreach \i in {0,...,2} {
     \path  (P\i1) [gray,dot] {};
  }
  \foreach \i in {0,...,8} {
     \path  (P\i2) [gray,dot] {};
  }
 \path
   (O) [red,dot] node[below left,scale=0.9]{$(r,k)$}
   (Q) [dot,red] {}
   (P22) [blue,dot] node[above,scale=0.9]{$(x,e)$}
   (P01) [blue,dot] {}
 ;
\node[below left, red,scale=0.9] at ($(Q)!0.2!(O)$) {Trunk};



\end{tikzpicture}%

	} 
	\caption{The most recent ancestor of $(x,e)$.}
	\label{fig:ancester}
\end{figure}
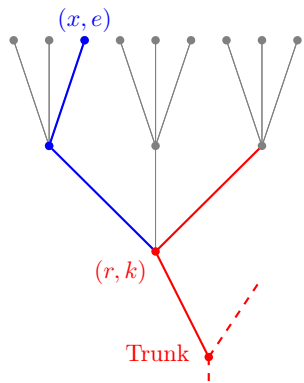

Using the notations of Lemma \ref{lem:toptree}, we have:
\[
P(r + p^{k}X) = p^{\varphi(r,k)} Q(X)
\]
where $\varphi(r,k)$ is the value of the tree-top function.
But additionally, we know that $Q(y) \not\equiv 0 \pmod{p}$ because otherwise $(r,k)$ would not be the most recent ancestor of $(x,e)$ belonging to the trunk.

Thus:
\begin{align*}
	P(x) \equiv 0 \pmod{p^e}
	&\iff P(r+p^{k}y) \equiv 0 \pmod{p^e} \\
	&\iff p^{\varphi(r,k)} Q(y) \equiv 0 \pmod{p^e} \\  	
	&\iff  \varphi(r,k) \ge e
\end{align*}
where for the last equivalence we used that $Q(y)\not\equiv 0 \pmod{p}$.

Moreover, for a given solution $x$, such a pair $(r,k)$ is unique if we impose the condition:  
\[
\varphi(r,k) -t_k < e \le \varphi(r,k)
\]
where $t_k$ is the thickness of the vertex $(r,k)$.  
This means that a vertex $(r,k)$ of the trunk corresponds uniquely to roots whose level is strictly greater than $\varphi(r,k) -t_k$ but less than or equal to $\varphi(r,k)$.


\section{Number of solutions}
\label{sec:count}

We will extract from Theorem \ref{th:trunktotree} a formula that allows us to directly compute the number of solutions from the trunk.

\subsection{Formula}
\label{ssec:count}

\begin{corollary}
	\label{cor:count}
	The number of solutions of the equation $P(x) \equiv 0 \pmod{p^e}$ in $\Zz/ p^e \Zz$ is:
	\[
	N_e = \sum_{\substack{(r,k) \in \Trunk(P) \\ \varphi(r,k) -t_k \, < \, e \, \le \, \varphi(r,k)}} 
	p^{e-k}.	
	\]
\end{corollary}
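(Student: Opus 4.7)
The plan is to derive the corollary directly from Theorem \ref{th:trunktotree}, turning its existential description of solutions into a partition of the solution set indexed by the trunk. The key ingredient is already sketched at the end of the proof of Theorem \ref{th:trunktotree}: for each solution $x$ modulo $p^e$, there exists a \emph{unique} trunk vertex $(r,k)$ with $x \equiv r \pmod{p^k}$ and $\varphi(r,k) - t_k < e \leq \varphi(r,k)$. Once that uniqueness is firmly in hand, only a count of the $x$'s attached to each such $(r,k)$ remains.

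First I would re-derive the uniqueness cleanly. Consider the trunk-path to $(x,e)$, that is, the chain of trunk ancestors of $(x,e)$; label its vertices $v_1, v_2, \ldots$ at levels $1, 2, \ldots$, with thicknesses $t_1, t_2, \ldots$, so that $\varphi(v_j) = t_1 + \cdots + t_j$. Since every $t_j \geq 1$, the values $\varphi(v_j)$ strictly increase, and the half-open intervals $(\varphi(v_j) - t_j,\,\varphi(v_j)] = (\varphi(v_{j-1}),\,\varphi(v_j)]$ are pairwise disjoint and tile $(0,\,\sup_j \varphi(v_j)]$. Consequently a given integer $e \geq 1$ lies in at most one such interval, and lies in one precisely when $(x,e) \in \Tree(P)$, by Theorem \ref{th:trunktotree}.

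Next, for a fixed trunk vertex $(r,k)$ with $\varphi(r,k) - t_k < e \leq \varphi(r,k)$, I would count its contribution. The parent of $(r,k)$ in the trunk has tree-top value $\varphi(r,k) - t_k \geq k-1$, since each thickness along the way is at least $1$; the hypothesis $\varphi(r,k) - t_k < e$ therefore forces $k \leq e$. Hence the number of $x \in \{0, 1, \ldots, p^e-1\}$ with $x \equiv r \pmod{p^k}$ is exactly $p^{e-k}$. Every such $x$ is a solution modulo $p^e$ by the ``if'' direction of Theorem \ref{th:trunktotree} (since $\varphi(r,k) \geq e$), and by the uniqueness above is attached to \emph{this} trunk vertex and no other.

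Summing over all eligible $(r,k)$ then yields the claimed formula: disjointness of the contributions is guaranteed by uniqueness, while exhaustiveness is guaranteed by the ``only if'' direction of Theorem \ref{th:trunktotree}. I do not anticipate any real obstacle; the argument is essentially bookkeeping on top of Theorem \ref{th:trunktotree}. The one step that requires a bit of care is the partitioning of levels, which rests on the strict increase of $\varphi$ along the trunk-path --- a direct consequence of every thickness being at least $1$.
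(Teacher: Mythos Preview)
Your proof is correct and follows essentially the same approach as the paper's: both establish that each solution $x$ modulo $p^e$ is attached to a unique trunk vertex $(r,k)$ satisfying $\varphi(r,k)-t_k<e\le\varphi(r,k)$ (the paper phrases this via the predecessor's tree-top value, you via the tiling of $(0,\sup_j\varphi(v_j)]$ by the intervals $(\varphi(v_{j-1}),\varphi(v_j)]$), and then count $p^{e-k}$ residues in each class. Your write-up is in fact a bit more careful than the paper's --- in particular, you make explicit why $k\le e$, which the paper glosses over.
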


\begin{proof}
We have seen that the fan originating from the vertex $(r,k)$ of the trunk produces solutions up to the height $\varphi(r,k)$.  
Let $(r^-,k-1)$ be the direct predecessor of $(r,k)$ (that is, the vertex of the trunk adjacent to $(r,k)$ on the side of the root). By denoting $t$ as the thickness of $(r,k)$ and $\varphi(r,k)$ as the tree-top function, we obtain: $\varphi(r^-,k-1) = \varphi(r,k) - t$.
Thus, the vertex $(r^-,k-1)$ produces solutions up to the height $\varphi(r,k) - t$.  
Therefore the solutions $x$, whose height $e$ satisfies $\varphi(r,k) - t < e \le \varphi(r,k)$,
are uniquely associated with the single element $(r,k)$ of the trunk.

How many solutions does such a vertex $(r,k)$ of the trunk produce?  
The fan originating from $(r,k)$ has:  
$1$ vertex at level $k$,  
$p$ vertices at level $k+1$,
$p^2$ vertices at level $k+2$, and so on.  
Thus, for a given level $e$, we associate $p^{e-k}$ solutions. The condition $\varphi(r,k) - t < e \leq \varphi(r,k)$ ensures that this level $e$ is reached by this vertex $(r,k)$ but not by any other vertices of the trunk.
\end{proof}

\subsection{Example}
\label{ssec:count-example}

The formula from Corollary \ref{cor:count} allows us to count the number $N_e$ of solutions without explicitly enumerating them. 
We return to our favorite example, whose trunk was computed in Example \ref{ex:ex1bis} and whose solutions were computed in Example \ref{ex:ex1}.

\begin{example}
	Let $P(X) = (X^2+3)(X^2+3X+9)$ and $p=3$.  
	The vertex $(0,1)$ at level $k=1$ of the trunk satisfies $\varphi_1=3$ and $t_1=3$. It contributes to the solutions modulo $p^e$ for levels $e$ satisfying  
	$\varphi_1 - t_1 < e \le \varphi_1$, that is, $e = 1,2,3$. Thus, $N_e = p^{e-k} = p^{e-1} = 3^{e-1}$ for $e = 1,2,3$.
	
	The vertex $(3,2)$ at level $k=2$ of the trunk satisfies $\varphi_2=4$ and $t_2=1$. It contributes to the solutions modulo $p^e$ for levels $e$ satisfying $\varphi_2 - t_2 < e \le \varphi_2$, which means only at level $e=4$, and thus $N_4 = p^{4-2} = 9$.
	For $e \geq 5$, we have $N_e = 0$.
	
\end{example}

\section{Structure of solutions}
\label{sec:struct}

\subsection{Algorithmic aspects}

This is a well-studied aspect (see references in Section \ref{ssec:ref}).
Here we will just explain that expressing the set of solutions in a compact form is easy, meaning it is done in polynomial complexity (depending on the degree of the polynomial and the level $e$; the prime $p$ being fixed), even though the number of these solutions can exponentially depend on these data.

The main reason this is possible is that the trunk is considerably simpler than the tree, even though by Theorem \ref{th:trunktotree} they are combinatorially equivalent data. Indeed, the number of vertices of $\Trunk(P)$
located at a fixed level is bounded by $d$ (the degree of $P$).
The proof for level $k=1$ is simply that the number of roots of
$P$ on the field $\Zz/p\Zz$ is less than $d$. This remains true for any level by induction thanks to the node rule (Lemma \ref{lem:noderule}).

\medskip

Let’s outline the algorithm for computing the trunk and the set of solutions of the equation $P(x) \equiv 0 \pmod{p^e}$.

\textbf{Data.} $p$ a prime number; $P \in \Zz[X]$ of degree $d$; $e \ge 0$ an integer.

\textbf{Goal.} Compute all solutions $P(x) \equiv 0 \pmod{p^e}$ for $x \in \Zz/p^e\Zz$.

\textbf{Step a.} Find solutions modulo $p$: solve $P(x) \equiv 0 \pmod{p}$ by exhaustive search on $0 \le x \le p-1$. The polynomial $P$ of degree $d$ on the field $\Zz/p\Zz$ has at most $d$ roots (and at most $p$ distinct roots). Each solution gives a vertex of the trunk.

\textbf{Step b.} Compute the thickness and decomposition $P(r+pX)=p^t Q(X)$ for each root $r$ from the previous step. This is done through a sequence of elementary operations:
(i) translation $P(X) \to P(a+X)$; (ii) substitution $P(X) \to P(pX)$; (iii) coefficient valuation. 
Associate the thickness with the corresponding vertex of the trunk.

\textbf{Iteration.} Each successor of $Q$ from the previous step, once reduced modulo $p$, has a degree equal to the residual degree (thus less than or equal to $d$, see Lemma \ref{lem:output}), and additionally, by the node rule (Lemma \ref{lem:noderule}), the total number of vertices for a given level is bounded by $d$. Thus each step \textbf{a} or \textbf{b} is repeated at most $d \cdot e$ times.

\medskip

The algorithms for calculating the trunk, the solution tree, and the formula for the number of solutions have been implemented via the computer algebra system \emph{Sage} \cite{sage}.

\subsection{Degrees}

Let $P \in \Zz[X]$.
\begin{itemize}
	\item We denote $d$ as the degree of $P$ in $\Zz[X]$.
	\item We denote $d_p$ as the degree in $\Zz/p\Zz[X]$ of the reduction of $P$ modulo $p$.
	\item We denote $d_{\Trunk}$ as the number of leaves of the trunk $\Trunk(P)$; each infinite branch of the tree counts as a leaf, in addition to the finite leaves.
\end{itemize}

These quantities allow for a rough estimate of the complexity of the trunk of $P$.
\begin{lemma}
	\label{lem:dtrunk}	
	\[
	d_{\Trunk} \le d_p \le d
	\]
\end{lemma}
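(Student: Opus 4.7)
The inequality $d_p \le d$ is immediate: reducing the coefficients of $P$ modulo $p$ can only drop a leading term, never add one.

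For the nontrivial inequality $d_{\Trunk} \le d_p$, the plan is to combine the node rule (Lemma \ref{lem:noderule}) with the multiplicity bound (Lemma \ref{lem:multiplicity}), after truncating $\Trunk(P)$ at a finite level to tame the infinite branches. For each integer $N\ge 1$, I would introduce the finite subtree $T_N\subseteq\Trunk(P)$ obtained by keeping only vertices at level $\le N$, and write $\ell_N(v)$ for the number of leaves of $T_N$ in the subtree rooted at $v$.

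The key step is a backward induction on the level of $v$ showing $\ell_N(v)\le t_v$ for every vertex $v$ at level $\ge 1$. A vertex at level exactly $N$ is a leaf of $T_N$, giving $\ell_N(v)=1\le t_v$ since thicknesses are $\ge 1$; a finite leaf of $\Trunk(P)$ at level $<N$ also gives $1\le t_v$; and for an internal vertex $v$ with children $w_1,\dots,w_l$, the node rule yields
\[
\ell_N(v)=\sum_i \ell_N(w_i)\le \sum_i t_{w_i}\le t_v.
\]
Applied at level $1$, where the vertices correspond to roots of $\reduc{P}$ with multiplicities $\mu_v$ and Lemma \ref{lem:multiplicity} gives $t_v\le\mu_v$, this yields
\[
(\text{leaves of }T_N)=\sum_{v\text{ at level }1}\ell_N(v)\le\sum_v\mu_v\le\deg\reduc{P}=d_p.
\]

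The main obstacle is passing from this bound for every finite $T_N$ to the statement about $d_{\Trunk}$, which counts infinite branches as leaves. The idea is that every finite subfamily of \emph{leaves at infinity} (finite leaves of $\Trunk(P)$ together with distinct infinite branches) is realized as a set of pairwise distinct leaves of some $T_N$: a finite leaf at level $\ell$ is a leaf of $T_N$ for every $N\ge\ell$, while any two distinct infinite branches separate at some finite level, so for $N$ sufficiently large the chosen finite leaves and the vertices at level $N$ of the chosen infinite branches are all distinct leaves of $T_N$. Any finite subfamily therefore has size $\le d_p$, so $d_{\Trunk}\le d_p$ (and in particular $d_{\Trunk}$ is finite).
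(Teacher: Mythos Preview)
Your proof is correct and follows essentially the same route as the paper: truncate the trunk at a finite level, use the node rule inductively to bound the number of leaves of the truncated tree by $d_p$, and then pass to the limit to handle infinite branches. Your write-up is more explicit than the paper's---you spell out the base case at level~$1$ via Lemma~\ref{lem:multiplicity} and the compactness-style argument for counting infinite branches, both of which the paper leaves implicit---but the underlying strategy is the same.
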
	

\begin{proof}
	The second inequality is obvious. Let's justify the first.
	By the node rule, Lemma \ref{lem:noderule}, we prove by induction on $K \ge 1$ that $d_p \ge \sum_{(r,k) \in X(\Trunk_{\le K}(P))} t(r,k)$,
	where $X(\Trunk_{\le K}(P))$ denotes the set of leaves of the trunk $\Trunk(P)$ truncated at level $K$.
	For $(r,k) \in \Trunk(P)$, $t(r,k) \ge 1$, so $d_p \ge d_{\Trunk}$.
\end{proof}

\subsection{Solutions}

Theorem \ref{th:trunktotree} provides a combinatorial characterization of the solutions of equation $P(x) \equiv 0 \pmod{p^e}$. Now, we will provide a more arithmetic description of these solutions.

For $x\in\Zz$, let $|x|_p = p^{-\val_p(x)}$ denote the $p$-adic absolute value
and $B(r,p^{-k})$ the associated closed ball:
$B(r,p^{-k}) = \{ x \in \Zz \mid \exists n \in \Zz, x = r + np^k \}$.
In other words, $x \in B(r,p^{-k})$ if and only if $p^k$ divides $x-r$.

The set of descendants of $(r,k)$ in the $p$-adic congruence tree $\Omega$, which is an infinite fan stemming from $(r,k)$, is thus also the set of $(x,e)$ where $x \in B(r,p^{-k})$ (and $e \ge k$).
We will consider $B(r,p^{-k}) \cap \llbracket0,p^e-1\rrbracket$ which is the intersection of the fan issued from $(r,k)$ with the level $p^e$.

Fix $e \ge 1$, we denote by $\mathcal{S}_e$, the set of solutions $x$, with $0 \le x \le p^e-1$, of the equation $P(x) \equiv 0 \pmod{p^e}$.
In other words, $\mathcal{S}_e$ corresponds exactly to the set of vertices of the $\Tree(P)$ having exactly level $e$.

\begin{proposition}
	\label{prop:disjoint}
	The set of solutions $\mathcal{S}_e$ is the union
	of at most $d_{\Trunk}$ disjoint subsets $B(r_i,p^{-k_i}) \cap \llbracket0,p^e-1\rrbracket$.
\end{proposition}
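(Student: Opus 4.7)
The plan is to start from Theorem \ref{th:trunktotree}, which rewrites as
\[
\mathcal{S}_e = \bigcup_{\substack{(r,k)\in\Trunk(P)\\ \varphi(r,k)\ge e}} B(r,p^{-k})\cap\llbracket 0,p^e-1\rrbracket,
\]
and then prune this highly redundant union. I would call $(r,k)\in\Trunk(P)$ an \emph{entry vertex} (at level $e$) if $\varphi(r,k)\ge e$ while its trunk predecessor $(r^-,k-1)$ satisfies $\varphi(r^-,k-1)<e$; denote by $\mathcal{M}_e$ the set of entry vertices. Because every thickness is at least $1$, the tree-top function $\varphi$ is strictly increasing along every root-to-descendant path in the trunk, so each trunk vertex with $\varphi\ge e$ has a unique ancestor in $\mathcal{M}_e$, and its ball is contained in the ancestor's ball. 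Thus the union above can be restricted to $(r,k)\in\mathcal{M}_e$.

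Next I would prove disjointness. The elements of $\mathcal{M}_e$ are by construction pairwise incomparable in $\Trunk(P)$. Since $\Trunk(P)$ inherits its partial order from $\Omega_p$, they are also pairwise incomparable in $\Omega_p$, and two incomparable vertices of $\Omega_p$ have disjoint sets of descendants. Translating, the balls $B(r_i,p^{-k_i})$ are pairwise disjoint, and so are their intersections with $\llbracket 0,p^e-1\rrbracket$.

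To bound the number of summands I would inject $\mathcal{M}_e$ into the set of leaves of $\Trunk(P)$. Each entry vertex $(r,k)$ extends downward in the trunk to at least one leaf (finite, or an infinite branch counted as a leaf); since distinct entry vertices are incomparable in the trunk, the descendant leaves one picks for different entry vertices are distinct. This yields an injection $\mathcal{M}_e\hookrightarrow\{\text{leaves of }\Trunk(P)\}$, whose codomain has cardinality $d_{\Trunk}$ by definition.

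The main potential obstacle is really a minor edge case: one must verify that $k\le e$ for $(r,k)\in\mathcal{M}_e$, otherwise $B(r,p^{-k})\cap\llbracket 0,p^e-1\rrbracket$ might be empty or ill-formed as a representative. But minimality gives $k-1\le\varphi(r^-,k-1)<e$, hence $k\le e$ and $r<p^k\le p^e$. The root $(0,0)$, for which no thickness is defined, need not be treated specially since $\varphi(0,0)=0<e$ when $e\ge1$, so it never belongs to $\mathcal{M}_e$. Apart from these book-keeping points, the proposition follows immediately from Theorem \ref{th:trunktotree} and the strict monotonicity of $\varphi$ along the trunk.
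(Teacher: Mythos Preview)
Your proof is correct and follows essentially the same route as the paper. Your ``entry vertex'' condition $\varphi(r,k)\ge e>\varphi(r^-,k-1)$ is exactly the condition $\varphi(r,k)-t_k<e\le\varphi(r,k)$ used in the paper's indexing (since $\varphi(r^-,k-1)=\varphi(r,k)-t_k$), and both proofs then exploit incomparability of these vertices to obtain disjointness and the leaf bound; your write-up is simply more self-contained than the paper's, which defers the disjointness step to the discussion in Section~\ref{ssec:count}.
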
	

This proposition appears in \cite[Proposition 1]{BLQ2013} and \cite[Proposition 3]{DS2020}. 
\begin{proof}
	According to Theorem \ref{th:trunktotree}:	
	\[
	\mathcal{S}_e
	= \bigcup_{\substack{(r,k) \in \Trunk(P) \\ \varphi(r,k) - t_k < e \le \varphi(r,k)}} B(r,p^{-k}) \cap \llbracket0,p^e-1\rrbracket
	\]
	In particular all the solutions $x$ in a ball $B(r,p^{-k}) \cap \llbracket0,p^e-1\rrbracket$ are associated with the same element $(r,k)$ of the trunk.

	The discussion in Section \ref{ssec:count} proves that this element $(r,k)$ is unique, i.e.{} the balls
$B(r_i,p^{-k_i}) \cap \llbracket0,p^e-1\rrbracket$ are disjoint.
In other words, for a path of the trunk from the root to a leaf (possibly in the form of an infinite branch) there is at most one element $(r,k)$ in the former decomposition of $\mathcal{S}_e$. It implies the bound on the number of balls. 
\end{proof}

\section{Case of degree two polynomials}
\label{sec:degtwo}

Let $p>2$ be a prime number.
Consider the case of a polynomial of degree $2$:
\[
P(X) = aX^2+bX+c \in \Zz[X]
\]
with $p \notdivides a$.
All the configurations begin above the root with a base consisting of a stem of $\ell$ vertices, each with a thickness of $2$ (possibly $\ell = 0$). Above this base, there are $4$ possible types.

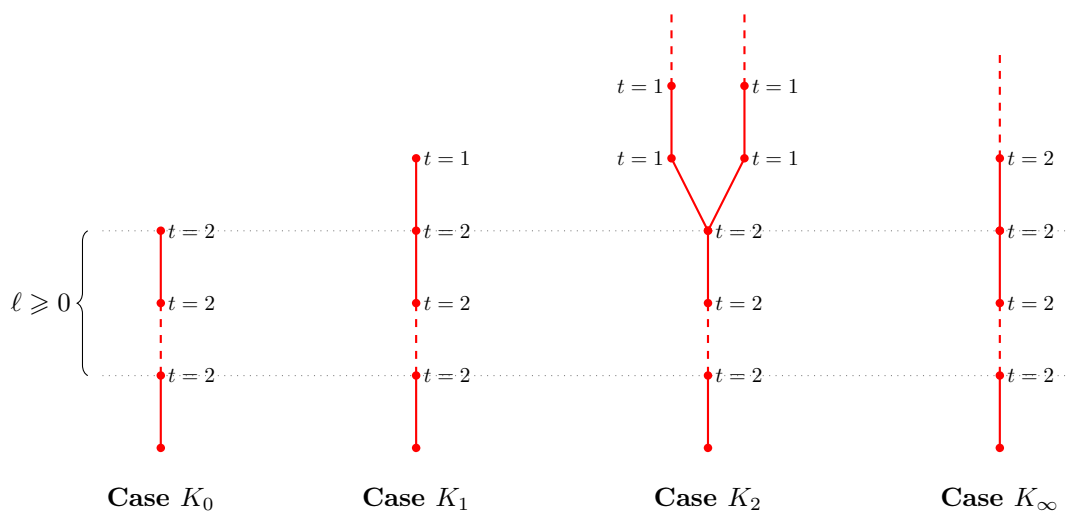
\begin{figure}[H]
	\myfigure{0.8}{
\begin{tikzpicture}[scale=1.2]
\usetikzlibrary{decorations.pathreplacing}
\tikzset{
  line/.style = {
  },
  vector/.style = {
    thick,-latex
  },
  dot/.style = {
    insert path={
      node[scale=3]{.}
    }
  }
}

\draw [decorate,decoration={brace,amplitude=4pt,mirror},xshift=0.5cm,yshift=0pt]
      (-1,0) -- (-1,-2) node [midway,left,xshift=-.1cm] {$\ell\ge0$};

\draw[gray,dotted,thin] (-0.3,0) -- (13,0);
\draw[gray,dotted,thin] (-0.3,-2) -- (13,-2);

\begin{scope}[xshift=0.5cm]

\path 
  (0,0) edge[line, thick,red, ] (0,-1)
  (0,-1) edge[line, thick,red, dashed] (0,-2)
  (0,-2) edge[line, thick,red] (0,-3)
;

 \path
   (0,0) [red,dot] node[right,black,scale=0.8]{$t=2$}
   (0,-1) [dot] node[right,black,scale=0.8]{$t=2$}
   (0,-2) [dot] node[right,black,scale=0.8]{$t=2$}
   (0,-3) [dot] node[right,black,scale=0.8]{}
;

\node at (0,-3.7) {\bf Case $K_0$};
\end{scope}

\begin{scope}[xshift=4cm]

\path 
  (0,0) edge[line, thick,red, ] (0,-1)
  (0,-1) edge[line, thick,red, dashed] (0,-2)
  (0,-2) edge[line, thick,red] (0,-3)
;

 \path
   (0,0) [red,dot] node[right,black,scale=0.8]{$t=2$}
   (0,-1) [dot] node[right,black,scale=0.8]{$t=2$}
   (0,-2) [dot] node[right,black,scale=0.8]{$t=2$}
   (0,-3) [dot] node[right,black,scale=0.8]{}
;

\path  (0,0) edge[line, thick,red, ] (0,1);
 \path (0,1) [red,dot] node[right,black,scale=0.8]{$t=1$};

\node at (0,-3.7) {\bf Case $K_1$};
\end{scope}

\begin{scope}[xshift=8cm]

\path 
  (0,0) edge[line, thick,red, ] (0,-1)
  (0,-1) edge[line, thick,red, dashed] (0,-2)
  (0,-2) edge[line, thick,red] (0,-3)
;

 \path
   (0,0) [red,dot] node[right,black,scale=0.8]{$t=2$}
   (0,-1) [dot] node[right,black,scale=0.8]{$t=2$}
   (0,-2) [dot] node[right,black,scale=0.8]{$t=2$}
   (0,-3) [dot] node[right,black,scale=0.8]{}
;

 \path
   (0,0) coordinate (P0)
   (-0.5,1) coordinate (P11)
   (-0.5,2) coordinate (P12)
   (0.5,1) coordinate (P21)
   (0.5,2) coordinate (P22)

 ;

\path 
  (P0) edge[line,  thick, red] (P11)
  (P11) edge[line,  thick, red] (P12)
  (P12) edge[line,  thick, dashed, red] (-0.5,3)
  (P0) edge[line,  thick, red] (P21)
  (P21) edge[line,  thick, red] (P22)
  (P22) edge[line,  thick, dashed, red] (0.5,3)
;

 \path
   (P0) [red,dot,] node[below]{}
   (P11) [dot, ] node[black,left, scale=0.8]{$t=1$}
   (P12) [dot, ] node[black,left, scale=0.8]{$t=1$}

   (P21) [dot, ] node[black,right, scale=0.8]{$t=1$}
   (P22) [dot, ] node[black,right, scale=0.8]{$t=1$}

 ;
\node at (0,-3.7) {\bf Case $K_2$};
\end{scope}

\begin{scope}[xshift=12cm]

\path 
  (0,0) edge[line, thick,red, ] (0,-1)
  (0,-1) edge[line, thick,red, dashed] (0,-2)
  (0,-2) edge[line, thick,red] (0,-3)
;

 \path
   (0,0) [red,dot] node[right,black,scale=0.8]{$t=2$}
   (0,-1) [dot] node[right,black,scale=0.8]{$t=2$}
   (0,-2) [dot] node[right,black,scale=0.8]{$t=2$}
   (0,-3) [dot] node[right,black,scale=0.8]{}
;

\path 
  (0,0) edge[line, thick,red, ] (0,1)
  (0,1) edge[line, thick,red, , dashed] (0,2.5)
;

 \path
   (0,0) [red,dot] {}
   (0,1) [dot] node[right,black,scale=0.8]{$t=2$}

;

\node at (0,-3.7) {\bf Case $K_\infty$};
\end{scope}

\end{tikzpicture}%

	} 
	\caption{Possible trunks.}
	\label{fig:trunkdeg2}
\end{figure}

\medskip

\textbf{Base.}

Since $P$ is a polynomial of degree $2$, the thickness is at most $2$, and this is only possible for a single root; the same holds for its successors. What can happen after a stem with thicknesses only $2$?

\smallskip

\textbf{Cases $K_0$ and $K_\infty$.}
This base stem of thickness $2$ can be infinite; this is the case, for example, for $P(X) = X^2$.  
The trunk can also stop just after the last vertex of thickness $2$ in the base.  
This is, for example, the case for $P(X) = (X-1)^2 + p^{2\ell}$ with $p=3$.

\smallskip

\textbf{Case $K_2$.}
In the remaining cases each vertex just after the base has a thickness of $1$.  
Let us then consider the polynomial $Q$ associated with the last vertex of thickness $2$. Suppose it has two simple roots modulo $p$ (that is $Q(x_i) \equiv 0 \pmod{p}$ and $Q'(x_i) \not\equiv 0 \pmod{p}$, $i=1,2$). Hensel's lemma then allows these two simple roots to be ``lifted'' indefinitely: for any $e \ge 1$, there exists $\tilde{x_i} \in \Zz$ ($i=1,2$) such that $\tilde{x_i} \equiv x_i \pmod{p}$ and $Q(\tilde{x_i}) \equiv 0 \pmod{p^e}$.
This is then the $K_2$ situation.

\begin{example}
	This is the case for $P(X) = (X-1)(X-2)+p$ with, for example, $p=5$.
	For $e=1$, the solutions of $P(x) \equiv 0 \pmod{p^e}$ are $\{1,2\}$.
	For $e=2$, it's $\{6,22\}$, and for $e=3$, it's $\{31,97\}$\ldots{}
	In this example the base is empty.
	
	\begin{figure}[H]
		\myfigure{0.6}{
\begin{tikzpicture}[scale=1.5]

\tikzset{
  line/.style = {
  },
  vector/.style = {
    thick,-latex
  },
  dot/.style = {
    insert path={
      node[scale=3]{.}
    }
  }
}

\begin{scope}

 \path
   (0,0) coordinate (P0)
   (-0.5,1) coordinate (P11)
   (-0.5,2) coordinate (P12)
   (-0.5,3) coordinate (P13)
   (-0.5,4) coordinate (P14)
   (0.5,1) coordinate (P21)
   (0.5,2) coordinate (P22)
   (0.5,3) coordinate (P23)
   (0.5,4) coordinate (P24)

 ;

\path 
  (P0) edge[line,  thick, red] (P11)
  (P11) edge[line,  thick, red] (P12)
  (P12) edge[line,  thick, red] (P13)
  (P13) edge[line,  thick, red] (P14)
  (P14) edge[line,  thick, dashed, red] (-0.5,5)
  (P0) edge[line,  thick, red] (P21)
  (P21) edge[line,  thick, red] (P22)
  (P22) edge[line,  thick, red] (P23)
  (P23) edge[line,  thick, red] (P24)
  (P24) edge[line,  thick, dashed, red] (0.5,5)
;


 \path
   (P0) [red,dot, red] node[below]{}
   (P11) [dot, red] node[black,left, scale=0.8]{$1$}
   (P12) [dot, red] node[black,left, scale=0.8]{$6$}
   (P13) [dot, red] node[black,left, scale=0.8]{$31$}
   (P14) [dot, red] node[black,left, scale=0.8]{$281$}

   (P21) [dot, red] node[black,right, scale=0.8]{$2$}
   (P22) [dot, red] node[black,right, scale=0.8]{$22$}
   (P23) [dot, red] node[black,right, scale=0.8]{$97$}
   (P24) [dot, red] node[black,right, scale=0.8]{$347$}

 ;


\end{scope}

\end{tikzpicture}%

		} 
		\caption{The trunk (and the tree) of $P(X)=(X-1)(X-2)+5$.}
		\label{fig:simpleroot}
	\end{figure}
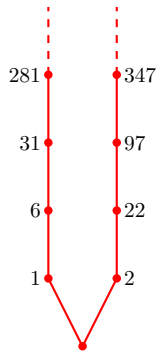

	More broadly, the polynomial $P(X) = (X-p^\ell)(X-2p^\ell) + p^{2\ell + 1}$ has a trunk, as in the case $K_2$, with a base of $\ell$ vertices and thickness $2$. 
	
\end{example}

\smallskip

\textbf{Case $K_1$.}
Let us resume the discussion started in the previous case. It is possible for $Q$ to have a double root of thickness $1$, as in the case of $Q(X) = (X - x_0)^2 + p$.  
We then have $Q(x_0 + pX) = p \big( pX^2 + 1 \big)$,
which indeed gives a thickness of $1$, but the successor of $Q$ is $pX^2 + 1$, which has no root modulo $p$.  
Thus, the trunk ends here in the $K_1$ configuration. This is, for example, the case for  
$P(X) = (X - 1)^2 + p^{2\ell+1}$ with $p = 3$.

\begin{example}
	Let $P(X) = (X-1)^2+p^5$ with $p=3$.
	Here are the solutions of the equation $P(x)\equiv 0 \pmod{p^e}$ for different values of $e$:
	\[
	\begin{array}{cc}
		p^e & \text{ solutions } \\ \hline
		3^1 & 1 \\
		3^2 & 1,4,7 \\
		3^3 & 1,10,19 \\	
		3^4 & 1, 10, 19, 28, 37, 46, 55, 64, 73 \\
		3^5 & 1, 28, 55, 82, 109, 136, 163, 190, 217 \\
	\end{array}
	\]
	
	For $e \ge 6$, the equation has no solutions.

	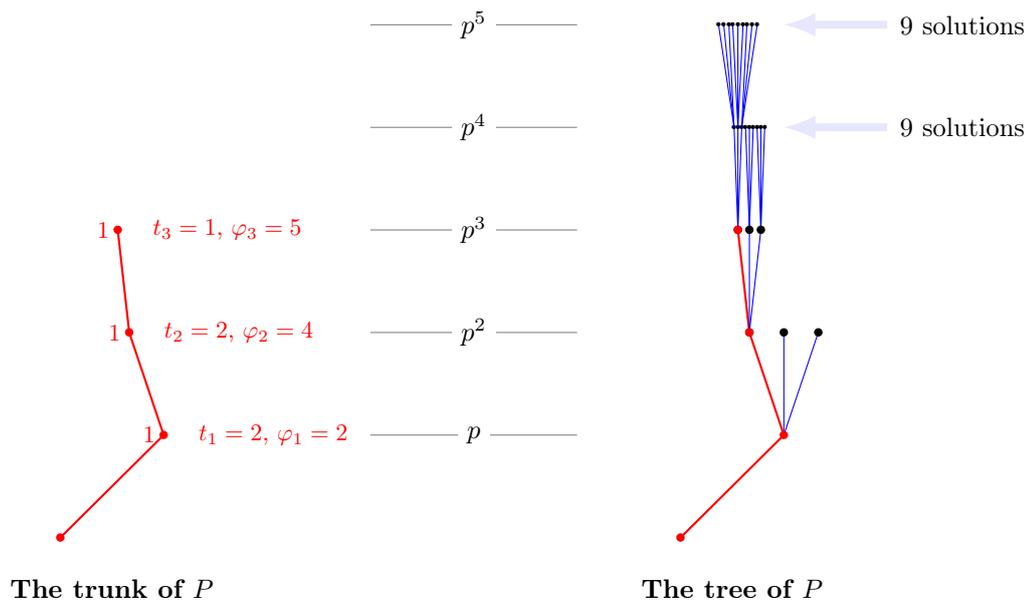
\begin{figure}[H]
		\myfigure{0.8}{
\begin{tikzpicture}[scale=1.7]

\tikzset{
  line/.style = {
  },
  vector/.style = {
    thick,-latex
  },
  dot/.style = {
    insert path={
      node[scale=3]{.}
    }
  },
  smalldot/.style = {
    insert path={
      node[scale=1.5]{.}
    }
  }
}

\begin{scope}
 \path
   (0,0) coordinate (O)
 ;

\foreach \i in {0,...,2} {
     \path (\i-1,1) coordinate (P\i1);
  }

\foreach \i in {0,...,8} {
     \path ({(\i-4)/3},2) coordinate (P\i2);
  }
\foreach \i in {0,...,26} {
     \path ({(\i-13)/9},3) coordinate (P\i3);
  }
\foreach \i in {0,...,80} {
     \path ({(\i-40)/27},4) coordinate (P\i4);
  }

\path (O) edge[line,  thick, red] (P21);
 

\path (P21) edge[line,  thick, red] (P62);

\path (P62) edge[line,  thick, red] (P183);


 \path
   (O) [red, dot, red] node[below]{}
   (P21) [dot, red] node[left,scale=0.9]{$1$} node[right=1em,scale=0.9]{$t_1 = 2$, $\varphi_1 = 2$}
   (P62) [dot, red] node[left,scale=0.9]{$1$} node[right=1em,scale=0.9]{$t_2 = 2$, $\varphi_2 = 4$}
   (P183) [dot, red] node[left,scale=0.9]{$1$} node[right=1em,scale=0.9]{$t_3 = 1$, $\varphi_3 = 5$}
 ;

\node at (0.5,-0.5) {\bf The trunk of $P$};

  \draw[line,thin,gray] (3,1) -- ++(2,0) node[midway, black, fill=white,]{$p$};
  \draw[line,thin,gray] (3,2) -- ++(2,0) node[midway, black, fill=white, ]{$p^2$};
  \draw[line,thin,gray] (3,3) -- ++(2,0) node[midway, black, fill=white, ]{$p^3$};
  \draw[line,thin,gray] (3,4) -- ++(2,0) node[midway, black, fill=white, ]{$p^4$};
  \draw[line,thin,gray] (3,5) -- ++(2,0) node[midway, black, fill=white, ]{$p^5$};

\end{scope}

\begin{scope}[xshift=6cm]
 \path
   (0,0) coordinate (O)
 ;

\foreach \i in {0,...,2} {
     \path (\i-1,1) coordinate (P\i1);
  }

\foreach \i in {0,...,8} {
     \path ({(\i-4)/3},2) coordinate (P\i2);
  }
\foreach \i in {0,...,26} {
     \path ({(\i-13)/9},3) coordinate (P\i3);
  }
\foreach \i in {0,...,80} {
     \path ({(\i-40)/27},4) coordinate (P\i4);
  }
\foreach \i in {0,...,80} {
     \path ({(\i-40)/27},4) coordinate (P\i4);
  }

\path (O) edge[line,  thick, red] (P21);
 

\path (P21) edge[line,  thick, red] (P62);

\path (P62) edge[line,  thick, red] (P183);

\path (P21) edge[line, blue] (P72);
\path (P21) edge[line, blue] (P82);

\path (P62) edge[line, blue] (P193);
\path (P62) edge[line, blue] (P203);

\path (P183) edge[line, thin, blue] (P544);
\path (P183) edge[line, thin, blue] (P554);
\path (P183) edge[line, thin, blue] (P564);

\path (P193) edge[line, thin, blue] (P574);
\path (P193) edge[line, thin, blue] (P584);
\path (P193) edge[line, thin, blue] (P594);

\path (P203) edge[line, thin, blue] (P604);
\path (P203) edge[line, thin, blue] (P614);
\path (P203) edge[line, thin, blue] (P624);


  \foreach \i in {0,...,0} {
  }
  \foreach \i in {6,...,8} {
     \path  (P\i2) [dot] {};
  }
  \foreach \i in {18,...,20} {
     \path  (P\i3) [dot] {};
  }
  \foreach \i in {54,...,62} {
     \path  (P\i4) [smalldot] {};
  }

  \draw[line, thin, blue,](P544) -- ++ (-0.15,1) coordinate(Q1);
  \draw[line, thin, blue,](P544) -- ++ (-0.1,1) coordinate(Q2);
  \draw[line, thin, blue,](P544) -- ++ (-0.05,1) coordinate(Q3);

  \draw[line, thin, blue,](P554) -- ++ (-0.05,1) coordinate(Q4);
  \draw[line, thin, blue,](P554) -- ++ (-0.,1) coordinate(Q5);
  \draw[line, thin, blue,](P554) -- ++ (0.05,1) coordinate(Q6);

  \draw[line, thin, blue,](P564) -- ++ (0.05,1) coordinate(Q7);
  \draw[line, thin, blue,](P564) -- ++ (0.1,1) coordinate(Q8);
  \draw[line, thin, blue,](P564) -- ++ (0.15,1) coordinate(Q9);

  \foreach \i in {1,...,9} {
     \path  (Q\i) [smalldot] {};
  }

 \path
   (O) [red, dot, red] {}
   (P21) [dot, red] {}
   (P62) [dot, red] {} 
   (P183) [dot, red] {} 
 ;

  \draw[<-,>=latex,line width=3pt,blue!10] (1,4) -- ++(1,0) node[right, black]{$9$ solutions};
  \draw[<-,>=latex,line width=3pt,blue!10] (1,5) -- ++(1,0) node[right, black]{$9$ solutions};

\node at (0.5,-0.5) {\bf The tree of $P$};

\end{scope}

\end{tikzpicture}%

		} 
		\caption{The trunk and the tree of $P(X)=(X-1)^2+3^5$.}
		\label{fig:trunkdouble}
	\end{figure}
	
\end{example}

\section{Perspectives and references}

\subsection{Perspectives}


Let us conclude by discussing the \emph{Poincaré series}:  
\[
S(u) = \sum_{e \geq 0} N_e \frac{u^e}{p^e},
\]  
where $N_e$ denotes the number of solutions to the equation $P(x) \equiv 0 \pmod{p^e}$  
(with the convention that $N_0 = 1$).  
The series $S(u)$ serves as the natural generating function associated with the number of solutions. In fact, it is a rational function in $u$:
\begin{theorem}[Igusa]
	\label{th:igusa}
	\[
	S(u) \in \Qq(u)
	\]
\end{theorem}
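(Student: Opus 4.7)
The plan is to exploit Corollary~\ref{cor:count} to express $S(u)$ directly in terms of the trunk, and then handle the finite part of the trunk and its (finitely many) infinite branches separately. Substituting the formula for $N_e$ into $S(u)$ and swapping the order of summation reduces the inner sum to a finite geometric progression, yielding
\[
S(u) = 1 + \frac{1}{1-u} \sum_{\substack{(r,k) \in \Trunk(P) \\ k \geq 1}} p^{-k} \bigl( u^{\varphi(r,k) - t_k + 1} - u^{\varphi(r,k)+1} \bigr).
\]
By Lemma~\ref{lem:dtrunk}, the trunk has at most $d$ infinite branches; removing them leaves a subtree with finitely many vertices, whose contribution to the above sum is a polynomial in $u$ divided by $1-u$, hence a rational function.

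The heart of the argument is to show that each infinite branch contributes a rational function as well. As noted in Section~\ref{sec:main}, an infinite branch in $\Trunk(P)$ corresponds to a root $\alpha \in \Zz_p$ of $P$ of some multiplicity $m \geq 1$. Writing the factorisation $P(X) = (X-\alpha)^m U(X)$ in $\Zz_p[X]$ with $U(\alpha) \in \Zz_p^\times$, denoting by $r_k$ the $k$-digit truncation of $\alpha$, and setting $\beta_k = (\alpha - r_k)/p^k \in \Zz_p$, one obtains
\[
P(r_k + p^k X) = p^{km}\, (X - \beta_k)^m \, U(r_k + p^k X).
\]
A coefficient-by-coefficient valuation estimate then shows that the minimum $p$-adic valuation of the polynomial $(X-\beta_k)^m U(r_k + p^k X)$ in $X$ is $0$ for every $k \geq 1$: the coefficient of $X^m$ differs from $U(r_k)$ by terms of valuation at least $k$, and $U(r_k) \equiv U(\alpha) \pmod{p^k}$ is a $p$-adic unit. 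Hence $\varphi(r_k, k) = km$, so the thickness $t_k$ stabilises to $m$ along the entire branch.

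With $t_k \equiv m$ along the branch for $k \geq K_0$, the tail of the sum collapses to a geometric series
\[
(1-u^m)\, u^{\varphi_{K_0}+1} \sum_{j \geq 0} p^{-(K_0+1+j)} u^{mj} = \frac{(1-u^m)\, u^{\varphi_{K_0}+1}}{p^{K_0+1}\, (1 - u^m/p)},
\]
which is rational. Summing these contributions from the at most $d$ infinite branches with the rational contribution of the finite part gives the rationality of $S(u)$.

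The principal obstacle is the coefficient valuation analysis in the stabilisation step: one must verify that the minimum valuation of $(X-\beta_k)^m U(r_k + p^k X)$ is indeed $0$ and is attained at degree $m$, regardless of the value of $\val_p(\beta_k)$, which depends on the position of the nonzero digits in the $p$-adic expansion of $\alpha$. Once this bookkeeping is carried out, the rest of the argument reduces to summing a finite list of geometric series.
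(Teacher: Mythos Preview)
The paper does not actually prove this theorem; it leaves it to the reader with the hint to use the structure of the trunk and Corollary~\ref{cor:count}. Your overall strategy follows that hint exactly, and the reduction to a finite sum plus finitely many geometric series along the infinite branches is the right shape for the argument.

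There is, however, a genuine gap in your stabilisation step. You write the factorisation $P(X)=(X-\alpha)^m U(X)$ in $\Zz_p[X]$ \emph{with $U(\alpha)\in\Zz_p^\times$}, and your valuation computation for the coefficient of $X^m$ relies on this. But nothing guarantees that $U(\alpha)$ is a unit. Take $P(X)=X(X-p)$ and $\alpha=0$: then $m=1$, $U(X)=X-p$, and $U(0)=-p\notin\Zz_p^\times$. In this example one checks directly that $\varphi(r_1,1)=2\neq 1\cdot m$, so your claim $\varphi(r_k,k)=km$ for every $k\ge 1$ fails, and the minimum valuation of $(X-\beta_1)^m U(r_1+pX)$ is $1$, not $0$. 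More generally, $U(\alpha)$ fails to be a unit precisely when $P$ has another root (in $\Zz_p$ or in a ramified extension) congruent to $\alpha$ modulo $p$, and this is not excluded.

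The fix is both shorter and already available in the paper: use the node rule (Lemma~\ref{lem:noderule}). Since each child's thickness is bounded by the parent's thickness ($t_i\le t_1+\cdots+t_l\le t$), the sequence of thicknesses along any path in $\Trunk(P)$ is non-increasing; being a sequence of positive integers, it is eventually constant, say equal to $m$ for $k\ge K_0$. From that point on your geometric-series computation goes through verbatim, with $\varphi(r_k,k)=\varphi(r_{K_0},K_0)+(k-K_0)m$. You do not need the $p$-adic factorisation of $P$ at all.
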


We leave it to the reader to prove this result by relying on the structure of the trunk and Corollary \ref{cor:count}.  
For polynomials in several variables, an analogous result, due to Igusa, remains valid. This area of research is still very active today \cite{PoVe}.

\subsection{References}
\label{ssec:ref}

Our problem is masterfully addressed by Schmidt and Stewart in 1997, in the article \cite{ScSt} which utilizes graph studies and contains, either explicitly or implicitly, all the notions and results of the present article as well as numerous additional results. It seems that this article did not receive the widespread attention it deserved.

Fortunately, given its importance, the problem and the solution have resurfaced multiple times, especially when it comes to finding algorithmic solutions to polynomial problems. Thus, the explicit construction of the central notion of the present article, that of the ``trunk'', is given by Zúñiga-Galindo \cite{ZG2003} for a calculation of Igusa’s zeta function. The same construction is found in the article by Berthomieu, Lecerf, and Quintin \cite{BLQ2013} for determining the roots of polynomials in local rings. These same objects and results are taken up by Dwivedi, Mittal, and Saxena \cite{DMS2019}, \cite{DMS2019P4}, \cite{DS2020}, for example, for factorization problems. Finally, Kopp, Randall, Rojas, and Zhu \cite{Kopp} define, draw, and use the trunk to count the number of solutions without explicitly detailing them.

Regarding the more elementary notion of the tree of solutions modulo $p^e$ of a polynomial, classic references are \cite{Apo1976} or \cite{NZM}.

\bigskip

\emph{Acknowledgments.}
	We thank the referees and the editors for their helpful comments and suggestions.


\bibliographystyle{plain}
\bibliography{tree.bib}

\begin{thebibliography}{10}

\bibitem{Apo1976}
Tom~M. Apostol.
\newblock {\em Introduction to analytic number theory}.
\newblock Undergraduate Texts in Mathematics. Springer-Verlag, New
  York-Heidelberg, 1976.

\bibitem{BLQ2013}
J\'{e}r\'{e}my Berthomieu, Gr\'{e}goire Lecerf, and Guillaume Quintin.
\newblock Polynomial root finding over local rings and application to error
  correcting codes.
\newblock {\em Appl. Algebra Engrg. Comm. Comput.}, 24(6):413--443, 2013.

\bibitem{DMS2019}
Ashish Dwivedi, Rajat Mittal, and Nitin Saxena.
\newblock Counting basic-irreducible factors mod {$p^k$} in deterministic
  poly-time and {$p$}-adic applications.
\newblock In {\em 34th {C}omputational {C}omplexity {C}onference}, volume 137
  of {\em LIPIcs. Leibniz Int. Proc. Inform.}, pages Art. No. 15, 29. Schloss
  Dagstuhl. Leibniz-Zent. Inform., Wadern, 2019.

\bibitem{DMS2019P4}
Ashish Dwivedi, Rajat Mittal, and Nitin Saxena.
\newblock Efficiently factoring polynomials modulo {$p^4$}.
\newblock In {\em I{SSAC}'19---{P}roceedings of the 2019 {ACM} {I}nternational
  {S}ymposium on {S}ymbolic and {A}lgebraic {C}omputation}, pages 139--146.
  ACM, New York, 2019.

\bibitem{DS2020}
Ashish Dwivedi and Nitin Saxena.
\newblock Computing {I}gusa's local zeta function of univariates in
  deterministic polynomial-time.
\newblock In {\em A{NTS} {XIV}---{P}roceedings of the {F}ourteenth
  {A}lgorithmic {N}umber {T}heory {S}ymposium}, volume~4 of {\em Open Book
  Ser.}, pages 197--214. Math. Sci. Publ., Berkeley, CA, 2020.

\bibitem{Kopp}
Leann Kopp, Natalie Randall, J.~Maurice Rojas, and Yuyu Zhu.
\newblock Randomized polynomial-time root counting in prime power rings.
\newblock {\em Math. Comp.}, 89(321):373--385, 2020.

\bibitem{NZM}
Ivan Niven, Herbert~S. Zuckerman, and Hugh~L. Montgomery.
\newblock {\em An introduction to the theory of numbers}.
\newblock John Wiley \& Sons, Inc., New York, fifth edition, 1991.

\bibitem{PoVe}
Naud Potemans and Willem Veys.
\newblock Introduction to {$p$}-adic {I}gusa zeta functions.
\newblock In {\em {$p$}-adic analysis, arithmetic and singularities}, volume
  778 of {\em Contemp. Math.}, pages 71--102. Amer. Math. Soc., 2022.

\bibitem{ScSt}
Wolfgang~M. Schmidt and C.~L. Stewart.
\newblock Congruences, trees, and {$p$}-adic integers.
\newblock {\em Trans. Amer. Math. Soc.}, 349(2):605--639, 1997.

\bibitem{Shamir1993}
Adi Shamir.
\newblock On the generation of multivariate polynomials which are hard to
  factor.
\newblock In {\em Proceedings of the Twenty-Fifth Annual ACM Symposium on
  Theory of Computing}, STOC '93, page 796–804, New York, NY, USA, 1993.
  Association for Computing Machinery.

\bibitem{sage}
W.\thinspace{}A. Stein et~al.
\newblock {\em {S}age {M}athematics {S}oftware}.
\newblock The Sage Development Team.
\newblock {\tt http://www.sagemath.org}.

\bibitem{ZG2003}
W.~A. Zuniga-Galindo.
\newblock Computing {I}gusa's local zeta functions of univariate polynomials,
  and linear feedback shift registers.
\newblock {\em J. Integer Seq.}, 6(3):Article 03.3.6, 18, 2003.

\end{thebibliography}

\end{document}